\theoremstyle{plain}
\newtheorem{theorem}{Theorem}[section]
\newtheorem{lemma}[theorem]{Lemma}
\newtheorem{corollary}[theorem]{Corollary}
\newtheorem{proposition}[theorem]{Proposition}
\theoremstyle{definition}
\newtheorem{definition}[theorem]{Definition}
\newtheorem{example}[theorem]{Example}
\newtheorem{question}[theorem]{Question}
\theoremstyle{remark}
\newtheorem{remark}[theorem]{Remark}
\newtheorem{notation}[theorem]{Notation}
\newtheorem*{conventions}{Notations and conventions}
\newtheorem*{acknowledgements}{Acknowledgements}
\newtheorem*{structure}{Structure of the paper}
\title[Belyi's theorem for smooth WCI of general type]{Belyi's theorem for smooth \\ complete intersections of general type in \\ generalised Grassmannians and \\ weighted projective spaces}
\author{Mikhail Ovcharenko}
\address
{
  \textnormal{Steklov Mathematical Institute of RAS, 8 Gubkina street, Moscow 119991, Russia.}
  \newline
  \textnormal{HSE University, Laboratory of Mirror Symmetry, 6 Usacheva str., Moscow 119048, Russia.}
}
\email{ovcharenko@mi-ras.ru}
\DeclareMathOperator{\Aut}{Aut}
\DeclareMathOperator{\Bl}{Bl}
\DeclareMathOperator{\Bs}{Bs}
\DeclareMathOperator{\Cl}{Cl}
\DeclareMathOperator{\Crit}{Crit}
\DeclareMathOperator{\codim}{codim}
\DeclareMathOperator{\height}{ht}
\DeclareMathOperator{\Gal}{Gal}
\DeclareMathOperator{\Hom}{Hom}
\DeclareMathOperator{\Pic}{Pic}
\DeclareMathOperator{\Proj}{Proj}
\DeclareMathOperator{\res}{res}
\DeclareMathOperator{\Sing}{Sing}
\DeclareMathOperator{\Spec}{Spec}
\DeclareMathOperator{\Stab}{Stab}
\DeclareMathOperator{\WDiv}{WDiv}
\begin{document}

\begin{abstract}
  We show that A.~Javanpeykar's proof of Belyi's theorem for smooth complete intersections of general type in ordinary projective spaces can be generalised to smooth complete intersections of general type in generalised Grassmannians and weighted projective spaces.

  We propose an approach to the generalisation of this result to smooth complete intersections of general type in more general Mori dream spaces.
\end{abstract}

\maketitle

\section{Introduction}

In this paper we obtain a Belyi-type characterisation of smooth complex complete intersections of general type in generalised Grassmannians and weighted projective spaces which can be defined over \(\overline{\mathbb{Q}}\). Throughout the paper all varieties are assumed to be defined over \(\mathbb{C}\). A smooth projective variety~\(X\) is of \emph{general type} if its Kodaira dimension is maximal, i.e., equals \(\dim(X)\).

Let us recall the classical Belyi's theorem.

\begin{theorem}[see~\cite{belyi/extensions,kock/belyi}]\label{theorem:belyi}
  A smooth complex projective curve \(X\) can be defined over \(\overline{\mathbb{Q}}\) if and only if there exists a surjective morphism \(f \colon X \rightarrow \mathbb{P}^1\) \'{e}tale over \(\mathbb{P}^1(\mathbb{C}) \setminus \{0, 1, \infty\}\).
\end{theorem}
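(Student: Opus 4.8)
The plan is to prove the two implications by completely different means: the ``if'' direction by a rigidity argument, and the ``only if'' direction by Belyi's two explicit reductions. For the ``if'' direction, suppose $f \colon X \to \mathbb{P}^1$ is étale over $U = \mathbb{P}^1 \setminus \{0,1,\infty\}$. The plan is to exploit the invariance of the étale fundamental group of $U$ under extension of algebraically closed base fields: since $\{0,1,\infty\} \subset \mathbb{P}^1(\mathbb{Q})$, one has $\pi_1^{\textup{\'et}}(U_{\overline{\mathbb{Q}}}) \xrightarrow{\sim} \pi_1^{\textup{\'et}}(U_{\mathbb{C}}) \cong \widehat{F_2}$, so every finite étale cover of $U_{\mathbb{C}}$ is the base change of a finite étale cover of $U_{\overline{\mathbb{Q}}}$. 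Restricting $f$ to $f^{-1}(U) \to U$ and descending, then passing to the smooth projective completion, I would conclude that $X$ itself is defined over $\overline{\mathbb{Q}}$.

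For the ``only if'' direction, start with $X$ over $\overline{\mathbb{Q}}$ and choose any nonconstant morphism $g \colon X \to \mathbb{P}^1$ defined over $\overline{\mathbb{Q}}$; its set of critical values is a finite subset of $\mathbb{P}^1(\overline{\mathbb{Q}})$. Belyi's first reduction makes all critical values rational. Here I would induct on $d(g) = \deg q$, where $q \in \mathbb{Q}[t]$ is the squarefree minimal polynomial of the non-rational finite critical values of $g$. If $d(g) > 0$, compose $g$ with $q \colon \mathbb{P}^1 \to \mathbb{P}^1$: since $q$ vanishes on all non-rational critical values and has rational coefficients, the finite critical values of $q \circ g$ lie in $\{0\} \cup q(\mathbb{Q}) \cup \Crit(q)$, and the non-rational ones occur only among $\Crit(q) = q(\{q' = 0\})$. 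As $\deg q' = d(g) - 1$, the minimal polynomial of these new non-rational critical values has degree at most $d(g) - 1$, so $d(q \circ g) < d(g)$ and the induction terminates with all critical values in $\mathbb{P}^1(\mathbb{Q})$.

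For Belyi's second reduction, apply a Möbius transformation sending three rational critical values to $0, 1, \infty$; the remaining critical values form a finite set in $\mathbb{Q} \setminus \{0,1\}$, which I would move into $(0,1)$ one at a time using the anharmonic $S_3$-action permuting $\{0,1,\infty\}$. For such a value $\lambda = m/(m+n)$ with $m,n$ coprime positive integers, compose with the Belyi polynomial $B_{m,n}(x) = \frac{(m+n)^{m+n}}{m^m n^n}\, x^m (1-x)^n$, whose critical values are contained in $\{0,1,\infty\}$ and which satisfies $B_{m,n}(\{0,1,\infty,\lambda\}) \subseteq \{0,1,\infty\}$; hence composing maps the critical values into $\{0,1,\infty\}$ while strictly decreasing the number lying outside $\{0,1,\infty\}$. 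Iterating produces $f \colon X \to \mathbb{P}^1$ étale over $\mathbb{P}^1(\mathbb{C}) \setminus \{0,1,\infty\}$.

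I expect the main obstacle to be the termination of the first reduction, namely pinning down the right complexity measure: the key insight, and the crux of Belyi's argument, is that composing with the minimal polynomial of the offending critical values trades them for the critical values of that polynomial, which are controlled by its derivative and hence have strictly smaller degree over $\mathbb{Q}$. Verifying that the Belyi polynomials behave as claimed, and that each reduction introduces no new branch points outside the target set, is then an elementary but careful computation.
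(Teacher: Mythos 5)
Your proposal is correct: the paper does not prove Theorem~\ref{theorem:belyi} at all but simply cites \cite{belyi/extensions} and \cite{kock/belyi}, and your two-part argument is exactly the standard proof contained in those references --- descent of the finite \'etale cover \(f^{-1}(U)\to U\) via the invariance of \(\pi_1^{\mathrm{\acute{e}t}}(\mathbb{P}^1\setminus\{0,1,\infty\})\) under extension of algebraically closed fields of characteristic \(0\) for the ``if'' direction (this is the content of K\"ock's careful treatment), and Belyi's two reductions, with termination of the first governed by \(\deg q' = \deg q - 1\) together with Galois-stability of \(q(\{q'=0\})\), for the ``only if'' direction. All the steps you flag as needing verification (the critical values of \(B_{m,n}\), the strict decrease of the complexity measures) check out as you describe.
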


Belyi's theorem is a starting point for the Grothendieck's theory of \emph{Dessins d'Enfants}: it implies that \(\Gal(\mathbb{Q})\) acts faithfully on the \'{e}tale fundamental group of \(\mathbb{P}^1(\mathbb{C}) \setminus \{0,1, \infty\}\) by outer automorphisms (see~\cite[Theorem~4.7.7]{szamuely/galois}). Moreover, \(\Gal(\mathbb{Q})\) acts faithfully on the set of connected components of the moduli space of surfaces of general type (see~\cites{bauer/belyi,easton/belyi}).

So it is natural to expect higher-dimensional analogues of Belyi's theorem. In the case \(\dim(X) \geqslant 2\) a morphism \(X \rightarrow \mathbb{P}^1\) is replaced by a \emph{Lefschetz function}.

\begin{definition}\label{definition:lefschetz}
  Let \(X\) be a smooth projective variety, and let \(\mathcal{H}\) be a very ample linear system on \(X\).

  A \emph{Lefschetz pencil} \(\mathcal{L} \subset \mathcal{H}\) is a one-dimensional linear subsystem such that \(\codim_X(\Bs(\mathcal{L})) > 1\), and any element \(X_t \in \mathcal{L}\) of the pencil has at worst a single ordinary double point.

  A \emph{Lefschetz function} (see~{\cites{gonzalez-diez/belyi,javanpeykar/belyi}}) is a composition of the rational map \(X \dashrightarrow \mathbb{P}^1\) defined by a Lefschetz pencil with a rational function on \(\mathbb{P}^1\).
\end{definition}

\begin{remark}
  In Theorem~\ref{theorem:belyi} we can always assume that all ramification indices of \(f\) at points lying above 1 are equal to 2 (see~\cite[Exercise~4.7]{szamuely/galois}), so Definition~\ref{definition:lefschetz} is consistent with the one-dimensional case.
\end{remark}

A Belyi-type theorem holds for smooth surfaces of general type by the result of G.~Gonz\'{a}lez-Diez (see~\cite[Theorem~1, Proposition~1]{gonzalez-diez/belyi}). Following the approach of Gonz\'{a}lez-Diez, A.~Javanpeykar has obtained a Belyi-type theorem for smooth complete intersections of general type in ordinary projective spaces.

\begin{theorem}[see~{\cite[Theorem~1.1]{javanpeykar/belyi}}]\label{theorem:javanpeykar}
  Let \(X \subset \mathbb{P}^n\) be a smooth complex complete intersection of general type of dimension at least 3. Then the variety \(X\) can be defined over \(\overline{\mathbb{Q}}\) if and only if there exists a Lefschetz function \(X \dashrightarrow \mathbb{P}^1\) with at most 3 critical points.
\end{theorem}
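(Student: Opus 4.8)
The plan is to prove both implications, following the template of Gonz\'alez-Diez for surfaces (see~\cite{gonzalez-diez/belyi}) and isolating the genuinely higher-dimensional input as a Shafarevich-type finiteness statement for the smooth fibres of a Lefschetz pencil. Throughout I use that, since \(\dim X \geq 3\), the Grothendieck--Lefschetz theorem gives \(\Pic(X) = \mathbb{Z}\cdot\mathcal{O}_X(1)\) and that general type means \(K_X = \mathcal{O}_X(k)\) is ample with \(k = \big(\sum d_i\big) - (n+1) > 0\); consequently the polarisation, hence the embedding, is intrinsic to the abstract variety, so it suffices to descend \(X\) as an abstract variety.

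For the direction \enquote{arithmetic \(\Rightarrow\) Belyi}, suppose \(X\) is defined over \(\overline{\mathbb{Q}}\). The very ample system \(|\mathcal{O}_X(1)|\) and the Lefschetz condition are defined over \(\overline{\mathbb{Q}}\), and a general pencil is Lefschetz, so I would choose a Lefschetz pencil \(\varphi\colon X \dashrightarrow \mathbb{P}^1\) over \(\overline{\mathbb{Q}}\) (with axis a codimension-\(2\) linear subspace over \(\overline{\mathbb{Q}}\)). Its discriminant \(\Delta\) (the finite set of \(t\) with \(X_t\) singular) is then a \(\overline{\mathbb{Q}}\)-subscheme of \(\mathbb{P}^1\), so \(\Delta \subset \mathbb{P}^1(\overline{\mathbb{Q}})\). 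Applying the constructive half of classical Belyi (\cite{belyi/extensions}) produces a rational function \(g\colon \mathbb{P}^1 \to \mathbb{P}^1\), defined over \(\overline{\mathbb{Q}}\), with \(g(\Delta) \subseteq \{0,1,\infty\}\) and \(g\) ramified only over \(\{0,1,\infty\}\). By Definition~\ref{definition:lefschetz} the composite \(g\circ\varphi\) is a Lefschetz function, and its critical points lie in \(\{0,1,\infty\}\), so there are at most three of them.

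For the converse, suppose \(\psi = g\circ\varphi\) is a Lefschetz function with at most three critical points, which I normalise by \(\Aut(\mathbb{P}^1)\) to lie in \(\{0,1,\infty\}\). The first step is a Belyi descent of the discriminant: every branch point of \(g\) is a critical point of \(\psi\) (any smooth point of \(\varphi\) over a ramification point of \(g\) is a ramification point of \(\psi\)), so \(g\) is ramified only over \(\{0,1,\infty\}\); by the Weil/Belyi direction (covers of \(\mathbb{P}^1 \setminus \{0,1,\infty\}\) are combinatorially rigid, hence defined over \(\overline{\mathbb{Q}}\)) the map \(g\) is defined over \(\overline{\mathbb{Q}}\). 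Since each \(t \in \Delta\) produces a node of \(\psi\) with critical point \(g(t)\in\{0,1,\infty\}\), we get \(\Delta \subseteq g^{-1}\{0,1,\infty\} \subset \mathbb{P}^1(\overline{\mathbb{Q}})\). Next I resolve the pencil, letting \(\pi\colon \widetilde{X} \to \mathbb{P}^1\) be the blow-up \(\Bl_{\Bs(\varphi)} X\); this is a Lefschetz fibration, smooth over \(U = \mathbb{P}^1 \setminus \Delta\), and by adjunction its smooth fibres are canonically polarised complete intersections of general type of dimension \(\dim X - 1 \geq 2\). To descend \(X\) I would use the criterion that a variety is defined over \(\overline{\mathbb{Q}}\) if and only if its set of \(\Aut(\mathbb{C})\)-conjugates is at most countable up to isomorphism (as in~\cite{gonzalez-diez/belyi}). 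For \(\sigma \in \Aut(\mathbb{C}/\mathbb{Q})\), the conjugate \(X^\sigma\) carries the conjugate fibration over \(U^\sigma = \mathbb{P}^1 \setminus \Delta^\sigma\), and \(\Delta^\sigma\) ranges over the finitely many \(\Gal(\overline{\mathbb{Q}}/\mathbb{Q})\)-conjugates of the algebraic set \(\Delta\); invoking a Shafarevich-type finiteness theorem for non-isotrivial families of canonically polarised manifolds over each fixed base \(U^\sigma\) leaves only finitely many possibilities for \(\widetilde{X^\sigma}\), and hence (by contracting the exceptional divisor) for \(X^\sigma\). Thus \(X\) has finitely many conjugates and is defined over \(\overline{\mathbb{Q}}\).

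The main obstacle is precisely the Shafarevich-type finiteness invoked in the previous step. For \(\dim X = 2\) the fibres are curves and one may quote the classical Arakelov--Parshin rigidity and finiteness; the point of the hypothesis \(\dim X \geq 3\) is that the fibres are then higher-dimensional varieties of general type, and one needs the boundedness and rigidity of families of canonically polarised manifolds over a fixed quasi-projective base \(U\). Establishing (or correctly citing) this is the technical heart of the argument, and it is exactly where Javanpeykar's machinery of Shafarevich-type theorems enters. Two secondary points must also be handled carefully: that \(\pi\) is genuinely non-isotrivial, which I expect to follow from the non-triviality of the Picard--Lefschetz monodromy around the nodes (so that finiteness, not merely boundedness, applies), and that extending each family across \(\Delta\) to recover \(\widetilde{X}\), and then contracting to recover \(X\), introduces only finite ambiguity.
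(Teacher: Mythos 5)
Your overall architecture matches the paper's proof (which obtains this statement as the special case \(Y=\mathbb{P}^n\) of Theorem~\ref{theorem:CI-admissible}): Belyi's algorithm handles the forward direction, and the converse proceeds by descending the discriminant to \(\overline{\mathbb{Q}}\), passing to the Lefschetz fibration \(\Bl_{\Bs(\mathcal{L})}(X)\to\mathbb{P}^1\), proving finiteness of the set of conjugates of that fibration, and concluding by the weak Weil criterion of Gonz\'alez-Diez. The gap sits exactly at the step you flag as the technical heart. You propose to get finiteness from non-isotriviality (via Picard--Lefschetz monodromy) together with ``a Shafarevich-type finiteness theorem for non-isotrivial families of canonically polarised manifolds over a fixed base''. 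No such theorem is available once the fibres have dimension at least 2: Arakelov--Parshin finiteness is specific to families of curves, and in higher dimensions what exists is only \emph{boundedness} (Kov\'acs--Lieblich), which upgrades to finiteness only for \emph{rigid} families. Non-isotriviality is much weaker than rigidity, so the implication you need would fail.

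The missing idea is how rigidity is actually produced. The smooth fibres of the Lefschetz fibration are complete intersections of general type of dimension at least 2 in \(\mathbb{P}^n\), and these satisfy the \emph{infinitesimal Torelli theorem}; by Lemma~\ref{lemma:lefschetz-fibration-rigidity} (Javanpeykar's Theorem~2.5) infinitesimal Torelli for the fibres forces the restriction of the fibration to \(\mathbb{P}^1(\mathbb{C})\setminus\Crit(f)\) to be a rigid smooth family, and only then do Lemma~\ref{lemma:ridig-lefschetz-fibrations-bounded} and Corollary~\ref{corollary:lefschetz-fibrations-finiteness}, together with the constancy of the Hilbert polynomial under conjugation (Lemma~\ref{lemma:CI-MDS-one-conjugation}), give the required finiteness of conjugates. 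This is not a citation technicality: infinitesimal Torelli is the input around which the whole paper is organised (it is precisely what Proposition~\ref{proposition:CI-admissible} verifies in the new settings, and it is why Theorem~\ref{theorem:WCI} carries the hypothesis \(\dim(\vert\mathcal{O}_X(1)\vert)\geqslant 2\)). A proof that replaces it by monodromy-based non-isotriviality does not close. Your forward direction and the reduction to descending the fibration are otherwise fine, modulo the minor point that the finiteness statement must be applied to the morphism \(f^{\sigma}\colon Y^{\sigma}\to\mathbb{P}^1\) over each fixed \(B^{\sigma}\) and then transferred to \(X\) itself, as in Lemma~\ref{lemma:conjugation-finiteness}.
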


In this paper we show that the proof of Theorem~\ref{theorem:javanpeykar} can be modified for smooth complete intersections of general type in generalised Grassmannians and weighted projective spaces, using the existing results on the infinitesimal Torelli theorem for such varieties (see~\cites{konno/torelli,usui/torelli}). We refer the reader to Section~\ref{section:preliminaries} for preliminaries and notations for (weighted) complete intersections.

\begin{definition}
  A \emph{generalised Grassmannian} is a projective rational homogeneous variety with Picard number one.
\end{definition}

\begin{remark}
  Any generalised Grassmannian can be presented as \(G / P\), where \(G\) is a simple algebraic group, and \(P\) is a maximal parabolic subgroup (see~\cite[Section~2]{munoz/campana} for a survey of rational homogeneous spaces).
\end{remark}

Let us formulate the main results of the paper.

\begin{theorem}\label{theorem:CI-grassmannians}
  Let \(Y\) be a generalised Grassmannian, and \(X \subset Y\) be a smooth complex complete intersection of general type of dimension at least~3. Then the variety \(X\) can be defined over \(\overline{\mathbb{Q}}\) if and only if there exists a Lefschetz function \(X \dashrightarrow \mathbb{P}^1\) with at most 3 critical points.
\end{theorem}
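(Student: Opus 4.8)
The plan is to transplant the proof of Theorem~\ref{theorem:javanpeykar} to the setting \(Y = G/P\), the only genuinely new analytic input being the infinitesimal Torelli theorem for complete intersections in generalised Grassmannians (see~\cites{konno/torelli,usui/torelli}), which replaces its counterpart for \(\mathbb{P}^n\). As in~\cite{javanpeykar/belyi} I would prove the two implications separately and write \(U := \mathbb{P}^1 \setminus \{0,1,\infty\}\).

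For the implication ``\(X\) definable over \(\overline{\mathbb{Q}}\) \(\Rightarrow\) a Lefschetz function with at most \(3\) critical points exists'' I would argue as follows. Restricting the generator of \(\Pic(Y) \cong \mathbb{Z}\) to \(X\) yields a very ample system, and a generic pencil inside it is a Lefschetz pencil in the sense of Definition~\ref{definition:lefschetz}; if \(X\) is defined over a number field \(K\), such a pencil, and hence its finite set of critical values \(S \subset \mathbb{P}^1(\overline{\mathbb{Q}})\), can be chosen over \(K\). Applying the classical Belyi construction underlying Theorem~\ref{theorem:belyi} I would produce a map \(\beta \colon \mathbb{P}^1 \to \mathbb{P}^1\) defined over \(\overline{\mathbb{Q}}\), unramified outside \(\{0,1,\infty\}\) and with \(\beta(S) \subseteq \{0,1,\infty\}\); postcomposing the pencil with \(\beta\) gives a Lefschetz function all of whose critical values lie in \(\{0,1,\infty\}\).

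The substantial direction is the converse. After an automorphism of \(\mathbb{P}^1\) I may assume the critical values lie in \(\{0,1,\infty\}\). I would then reformulate definability over \(\overline{\mathbb{Q}}\) as finiteness of the Galois orbit: fixing the multidegree, the smooth complete intersections of general type in \(Y\) form a quasi-projective moduli variety \(\mathcal{M}\) over \(\overline{\mathbb{Q}}\) --- here I would use the rigidity of \(G/P\), so that \(H^1(Y, T_Y) = 0\) forces every deformation of \(X\) to remain a complete intersection in \(Y\) --- and \([X]\) is definable over \(\overline{\mathbb{Q}}\) precisely when \([X] \in \mathcal{M}(\overline{\mathbb{Q}})\), equivalently when the \(\Aut(\mathbb{C})\)-orbit of \([X]\) is finite. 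Since \(0,1,\infty\) are fixed by every \(\sigma \in \Aut(\mathbb{C})\), each conjugate \(X^\sigma\) carries a Lefschetz function of the same topological type with critical values in \(\{0,1,\infty\}\), so it suffices to bound the number of such complete intersections. Blowing up the base locus turns the pencil into a fibration whose restriction over \(U\) is a smooth family; by Lefschetz theory this family reconstructs the Hodge structure on \(H^n(X)\) from its polarised variation of Hodge structure, whose monodromy is a representation \(\rho \colon \pi_1(U) \to \Aut(H^{n-1}(X_t), Q)\) into an orthogonal or symplectic group. There are only finitely many admissible \(\rho\) up to conjugacy, the local monodromies being Picard--Lefschetz transformations whose conjugacy classes are pinned down by the fixed topological type; consequently the period point of \(X\) ranges over a finite set. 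Infinitesimal Torelli for \(X\) (see~\cites{konno/torelli,usui/torelli}) makes the global period map \(\mathcal{P} \colon \mathcal{M} \to \Gamma \backslash D\) an immersion, so its fibres are discrete and, being fibres of a morphism of quasi-projective varieties, finite; the preimage of the finite set of admissible period points is therefore finite, bounding the Galois orbit of \([X]\) and completing the proof, exactly as in~\cite{gonzalez-diez/belyi,javanpeykar/belyi}.

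The main obstacle I anticipate is the verification of infinitesimal Torelli in the required range. Konno's and Usui's theorems are stated under hypotheses on the multidegree and the fibre dimension, and I would need to check that the general-type condition, i.e.\ that the total degree \(\sum_i d_i\) exceeds the Fano index \(\iota_Y\), together with \(\dim(X) \geqslant 3\), places us within their scope, treating any boundary multidegrees separately. A secondary point requiring care, absent in the \(\mathbb{P}^n\) case, is that the members \(X_t\) of the pencil are divisors in \(Y\) rather than complete intersections in a smaller generalised Grassmannian, so I must keep the Hodge-theoretic argument at the level of \(X\) itself and confirm that the finiteness of monodromy representations genuinely holds for the orthogonal or symplectic group attached to \(H^{n-1}(X_t)\).
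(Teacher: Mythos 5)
Your first implication (definability over \(\overline{\mathbb{Q}}\) implies a Lefschetz function with three critical values) matches the paper. The converse, however, has a genuine gap at its central finiteness step. You claim that ``there are only finitely many admissible \(\rho\) up to conjugacy, the local monodromies being Picard--Lefschetz transformations whose conjugacy classes are pinned down by the fixed topological type.'' This is false as an argument: \(\pi_1(\mathbb{P}^1\setminus\{0,1,\infty\})\) is free on two generators, and the character variety of representations into an orthogonal or symplectic group with prescribed conjugacy classes of the three local monodromies is in general positive-dimensional; only \emph{rigid} local systems are determined by their local data, and you have not established rigidity. Even if one repairs this by invoking Deligne's finiteness theorem for local systems underlying polarisable variations of Hodge structure, the deduction ``consequently the period point of \(X\) ranges over a finite set'' does not follow (finitely many local systems does not give finitely many period points of the fibres, let alone of the total space \(X\)), and the final step ``fibres of a morphism of quasi-projective varieties, hence finite'' fails because \(\Gamma\backslash D\) is not quasi-projective outside the Hermitian symmetric case, so the period map is not a morphism of quasi-projective varieties. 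A secondary unproved point: you assert each conjugate \(X^{\sigma}\) carries a Lefschetz function ``of the same topological type''; conjugate varieties need not be homeomorphic in general, so this requires first showing that \(X^{\sigma}\) is again a complete intersection of the same multidegree (the paper's Lemma~\ref{lemma:CI-MDS-one-conjugation}).

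The paper takes a different and sound route through this bottleneck: it packages the needed properties into the notion of an \emph{admissible} complete intersection (Definition~\ref{definition:CI-admissible}), verifies them for \(G/P\) via Konno's infinitesimal Torelli theorem (Proposition~\ref{proposition:CI-admissible}), deduces from infinitesimal Torelli that the Lefschetz fibration is a \emph{rigid} family of canonically polarised varieties (Lemma~\ref{lemma:lefschetz-fibration-rigidity}), and then applies the Kov\'acs--Lieblich boundedness theorem together with a Tsai-type theorem (Lemma~\ref{lemma:ridig-lefschetz-fibrations-bounded}, Corollary~\ref{corollary:lefschetz-fibrations-finiteness}) to get finiteness of the set of such fibrations over the fixed base; the weak Weil criterion (Lemma~\ref{lemma:conjugation-finiteness}) then yields definability over \(\overline{\mathbb{Q}}\). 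If you want to keep a period-map flavoured argument you would have to supply a substitute for Kov\'acs--Lieblich, which is precisely the hard input your sketch is missing.
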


\begin{theorem}\label{theorem:WCI}
  Let \(X \subset \mathbb{P}(\rho)\) be a smooth well-formed complex weighted complete intersection of general type of dimension at least 3. Assume that \(\dim(\vert \mathcal{O}_X(1) \vert) \geqslant 2\). Then the variety \(X\) can be defined over \(\overline{\mathbb{Q}}\) if and only if there exists a Lefschetz function \(X \dashrightarrow \mathbb{P}^1\) with at most 3 critical points.
\end{theorem}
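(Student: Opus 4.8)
The plan is to follow the proof of Theorem~\ref{theorem:javanpeykar} essentially verbatim, replacing the infinitesimal Torelli theorem for ordinary complete intersections by its weighted analogue due to Usui and Konno (see~\cites{usui/torelli,konno/torelli}) and checking that the positivity needed to produce Lefschetz pencils survives in the weighted setting. As usual, the statement splits into two implications, and only the second is substantial.

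For the implication ``defined over \(\overline{\mathbb{Q}}\) \(\Rightarrow\) existence of a Lefschetz function with at most \(3\) critical values in \(\mathbb{P}^1\)'' I would argue as follows. If \(X\) is defined over \(\overline{\mathbb{Q}}\), then so are the pair \((\mathbb{P}(\rho), X)\) and the linear system \(\vert \mathcal{O}_X(1) \vert\). Using \(\dim(\vert \mathcal{O}_X(1) \vert) \geqslant 2\) and well-formedness to guarantee enough positivity, I would produce a Lefschetz pencil \(\mathcal{L} \subset \vert \mathcal{O}_X(1) \vert\) (or in a suitable multiple) defined over \(\overline{\mathbb{Q}}\), exploiting that a generic pencil in the system is Lefschetz and that genericity can be realised over \(\overline{\mathbb{Q}}\). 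The associated rational map \(\phi \colon X \dashrightarrow \mathbb{P}^1\) is then defined over \(\overline{\mathbb{Q}}\), so its finitely many critical values lie in \(\mathbb{P}^1(\overline{\mathbb{Q}})\). Invoking the classical Belyi theorem (Theorem~\ref{theorem:belyi}) to find a rational function \(\beta \colon \mathbb{P}^1 \rightarrow \mathbb{P}^1\) over \(\overline{\mathbb{Q}}\) carrying these critical values into \(\{0, 1, \infty\}\), the composition \(\beta \circ \phi\) is the desired Lefschetz function with at most \(3\) critical values.

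For the converse I would use the countability criterion: a smooth projective variety is defined over \(\overline{\mathbb{Q}}\) as soon as it has at most countably many mutually non-isomorphic conjugates \(X^\sigma\), \(\sigma \in \Aut(\mathbb{C})\). Fix a Lefschetz function \(f \colon X \dashrightarrow \mathbb{P}^1\) whose critical values lie in \(\{0, 1, \infty\}\); since these three points are fixed by every field automorphism, each conjugate \(f^\sigma \colon X^\sigma \dashrightarrow \mathbb{P}^1\) is again a Lefschetz function with critical values in \(\{0, 1, \infty\}\). Blowing up the base locus yields a Lefschetz fibration \(\pi \colon \widetilde{X} \to \mathbb{P}^1\) smooth over \(U = \mathbb{P}^1 \setminus \{0, 1, \infty\}\), whose monodromy on the local system \(R^{n-1}\pi_* \mathbb{Z}_{\mathrm{prim}}\) is generated by Picard--Lefschetz transformations in the vanishing cycles. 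Because all smooth weighted complete intersections of a fixed multidegree and set of weights are diffeomorphic (Ehresmann, the discriminant complement being connected), the lattice \(H^{n-1}(X, \mathbb{Z})_{\mathrm{prim}}\) with its intersection form is common to all conjugates, and \(\pi_1(U)\) is free of rank \(2\); hence there are only countably many possibilities for the monodromy representation, and thus for the topological type of the fibration. It then remains to recover the complex structure from this discrete data together with the Hodge filtration, and here the infinitesimal Torelli theorem for weighted complete intersections (see~\cites{usui/torelli,konno/torelli}) enters: the period map is an immersion, which, exactly as in Javanpeykar's argument, upgrades the countability of the monodromy/Hodge-theoretic data to the countability of the isomorphism classes of the pairs \((X^\sigma, f^\sigma)\), a fortiori of the \(X^\sigma\). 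The countability criterion then gives that \(X\) is defined over \(\overline{\mathbb{Q}}\).

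The main obstacle I anticipate is precisely the weighted-specific input, not the formal scaffolding, which transfers from Javanpeykar without essential change. First, one must confirm that the hypotheses --- well-formedness, general type, \(\dim(\vert \mathcal{O}_X(1) \vert) \geqslant 2\) and \(\dim(X) \geqslant 3\) --- are exactly what is needed for \(\mathcal{O}_X(1)\) to be positive enough to carry honest Lefschetz pencils whose vanishing cycles span the primitive cohomology, since \(\mathcal{O}_X(1)\) need not be very ample on a weighted complete intersection. Second, and more seriously, the infinitesimal Torelli statements of Usui and Konno come with a list of excluded numerical cases; I would need to verify that these exceptions do not meet the range ``general type of dimension at least \(3\)'', or else dispose of the residual cases separately. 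Granting these two points, the diffeomorphism invariance of the topological type, the Picard--Lefschetz description of the monodromy, and the passage through the countability criterion all carry over verbatim.
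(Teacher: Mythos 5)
Your first implication (defined over \(\overline{\mathbb{Q}}\) \(\Rightarrow\) Lefschetz function with \(\leqslant 3\) critical values via Belyi's algorithm) matches the paper's chain \((1)\Rightarrow(2)\Rightarrow(3)\Rightarrow(4)\). The converse, however, contains a genuine gap at its central step. You propose to count: countably many monodromy representations of the free group \(\pi_1(\mathbb{P}^1\setminus\{0,1,\infty\})\) on a fixed lattice, and then to ``upgrade the countability of the monodromy/Hodge-theoretic data to the countability of the isomorphism classes'' using infinitesimal Torelli. This upgrade is not available: infinitesimal Torelli only says the period map is an \emph{immersion}, not injective, so the discrete monodromy data (even together with the Hodge filtration at one point) does not determine the family. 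Families of canonically polarised varieties over a fixed base with fixed topological type and fixed monodromy representation form positive-dimensional, hence uncountable, sets in general; nothing in your sketch rules this out, and ``recovering the complex structure from this discrete data'' is precisely a global Torelli statement that is neither assumed nor known here.

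What the paper does instead is: (i) infinitesimal Torelli for the \emph{fibres} of the Lefschetz fibration implies that the smooth part \(\left.f\right|_{B}\), \(B=\mathbb{P}^1(\mathbb{C})\setminus\Crit(f)\), is a \emph{rigid} family (Lemma~\ref{lemma:lefschetz-fibration-rigidity}, following Javanpeykar); (ii) the Kov\'acs--Lieblich boundedness theorem then gives \emph{finiteness} of the set of rigid smooth Lefschetz families of canonically polarised varieties with fixed Hilbert polynomial smooth over a fixed \(B\) (Lemma~\ref{lemma:ridig-lefschetz-fibrations-bounded} and Corollary~\ref{corollary:lefschetz-fibrations-finiteness}); (iii) since conjugation preserves the multidegree, hence the Hilbert polynomial (Lemma~\ref{lemma:CI-MDS-one-conjugation}), and preserves \(\{0,1,\infty\}\), the orbit of \(f\) under \(\Stab_{\Aut(\mathbb{C}/\mathbb{Q})}(B)\) is finite, and the finite-orbit form of Weil's descent criterion (Lemma~\ref{lemma:conjugation-finiteness}) yields that \(f\), and then \(X\), is defined over \(\overline{\mathbb{Q}}\). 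Your countability criterion is in any case weaker than the finite-orbit criterion actually used. Finally, the weighted-specific point you defer --- whether Usui's theorem applies to the pencil members --- is resolved in the paper via Proposition~\ref{proposition:CI-admissible} and Lemma~\ref{lemma:WCI-QS-WF-O1-dimension}: a member \(D\) of a very ample pencil on \(X\subset\mathbb{P}(\rho)\neq\mathbb{P}^n\) satisfies \(\dim(\vert\mathcal{O}_D(1)\vert)=\dim(\vert\mathcal{O}_X(1)\vert)\geqslant 2\), which is exactly Usui's hypothesis; this is where the assumption in the statement of the theorem is used.
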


\begin{remark}
  The only reason we assume \(\dim(\vert \mathcal{O}_X(1) \vert) \geqslant 2\) is because it is crucial for Usui's proof of the infinitesimal Torelli theorem (see~\cite{usui/torelli}). We are expecting that this restriction can be omitted (see Question~\ref{question:torelli}).
\end{remark}

\begin{structure}
  In Section~\ref{section:preliminaries} we briefly remind basic facts about (weighted) complete intersections and Cox rings. In Section~\ref{section:proof} we prove Theorems~\ref{theorem:CI-grassmannians} and~\ref{theorem:WCI}. In Section~\ref{section:generalisation} we discuss their possible generalisation to smooth well-formed complete intersection of general type in \emph{weighted generalised Grassmannians}. 
\end{structure}

\begin{conventions}
  For any variety \(Y\) we denote by \(\WDiv(Y)\) the group of its Weil divisors, and for any field \(\Bbbk\) we denote by \(Y(\Bbbk)\) the set of \(\Bbbk\)-points of \(Y\).

  Throughout the paper all varieties are assumed to be defined over \(\mathbb{C}\). However, most of our arguments work over any algebraically closed field of characteristic 0.
\end{conventions}

\begin{acknowledgements}
  The author is grateful to C.~Shramov for introducing us to the paper~\cite{javanpeykar/belyi} and useful discussions, to A.~Javanpeykar for encouraging us to write this paper and valuable comments, and to V.~Przyjalkowski for many helpful suggestions and careful reading of the paper. We also want to thank the referee for their useful remarks.
  
  This work was supported by the Russian Science Foundation under grant no.~24--71--10092, \url{https://rscf.ru/en/project/24-71-10092/}.
\end{acknowledgements}

\section{Preliminaries}\label{section:preliminaries}

\subsection{Complete intersections and Cox rings}\label{subsection:CI-generalities}

The goal of this subsection is to remind some generalities on complete intersections and Cox rings.

\subsubsection{Cox rings and Mori dream spaces}

\begin{definition}\label{definition:cox-ring}
  Let \(Y\) be an irreducible normal variety with \(\Cl(Y) \cong \mathbb{Z}^{\rho}\). Fix a subgroup \(K \subset \WDiv(Y)\) such that the canonical map \(K \rightarrow \Cl(Y)\) sending a Weil divisor \(D \in K\) to its class \([D] \in \Cl(Y)\) is an isomorphism. The \emph{Cox ring} of the variety \(Y\) is the following \(\Cl(Y)\)-graded algebra:
  \[
    \mathcal{R}(Y) = \bigoplus_{[D] \in \Cl(Y)} \mathcal{R}_{[D]}(Y), \quad
    \mathcal{R}_{[D]}(Y) = H^0(Y, \mathcal{O}_Y(D)),
  \]
  where the multiplication in \(\mathcal{R}(Y)\) is defined by multiplying homogeneous sections in the field of rational functions \(\mathbb{C}(Y)\).
\end{definition}

\begin{remark}
  The Cox ring \(\mathcal{R}(Y)\) is unique up to isomorphism, and \(\mathcal{R}(Y)\) is an integral domain (see~\cite[Construction~1.4.1.1, 1.5.1]{arzhantsev/cox}).
\end{remark}

\begin{definition}[see~{\cite[Definition~3.3.4.1]{arzhantsev/cox}}]
  An irreducible normal projective variety \(Y\) with divisor class group \(\Cl(Y) \cong \mathbb{Z}^{\rho}\) and finitely generated Cox ring \(\mathcal{R}(Y)\) is a \emph{Mori dream space} of rank \(\rho\).
\end{definition}

\begin{remark}
  Any Mori dream space has a natural decomposition of its cone of effective divisors into convex polyhedral sets called \emph{Mori chambers}. It encodes the birational geometry of a Mori dream space (see~\cite[\nopp 3.3.4]{arzhantsev/cox}).
\end{remark}

\begin{example}
  Any smooth Fano variety is a Mori dream space (see~\cite[Theorem~4.3.3.7]{arzhantsev/cox}). The Cox ring of a generalised flag variety \(G / P\) can be described in terms of the representation theory of the group \(G\) (see~\cite[\nopp 3.2.3]{arzhantsev/cox}).
\end{example}

\subsubsection{Correspondence between closed subschemes and homogeneous ideals}

There exists a natural correspondence between closed subschemes of a smooth Mori dream space \(Y\) and homogeneous ideals in its Cox ring \(\mathcal{R}(Y)\). More precisely, ideal sheaves on \(Y\) correspond to so-called ``saturated'' homogeneous ideals in \(\mathcal{R}(Y)\).

\begin{notation}[see~{\cite[Definition~2.2]{pieropan/galois}}]
  Let \(Y\) be an irreducible normal variety with divisor class group \(\Cl(Y) \cong \mathbb{Z}^{\rho}\). For any homogeneous element \(f \in \mathcal{R}(Y)\) we denote by \(D_f\) the corresponding effective divisor on \(Y\).
  \begin{enumerate}
  \item For any homogeneous ideal \(I \subset \mathcal{R}(Y)\) we denote by \(\varphi_Y(I)\) the ideal sheaf associated with the following sum of \(\mathcal{O}_Y\)-subsheaves:
    \[
      \varphi_Y(I) = \sum_{\substack{f \in I \\ \text{homogeneous}}} \mathcal{O}_Y(-D_f) \subseteq \mathcal{O}_Y.
    \]
    In particular, \(\varphi_Y(I)\) corresponds to a unique closed subscheme in \(Y\) (for example, see~\cite[Proposition~II.5.9]{hartshorne/algebraic}).
  \item For any ideal sheaf \(\mathcal{I} \subseteq \mathcal{O}_Y\) let \(\psi_Y(\mathcal{I}) \subset \mathcal{R}(Y)\) be the ideal generated by all homogeneous elements \(f \in \mathcal{R}(Y)\) such that \(\mathcal{O}_Y(-D_f) \subseteq \mathcal{I}\).
  \end{enumerate}
\end{notation}

\begin{remark}\label{remark:MDS-one-closed-subschemes}
  If \(\Cl(Y) \simeq \mathbb{Z}\), then \(Y \simeq \Proj(\mathcal{R}(Y))\) (see~\cite[\nopp 3.3.4]{arzhantsev/cox}). For any homogeneous ideal \(I \subset \mathcal{R}(Y)\) the quotient map \(\mathcal{R}(Y) \twoheadrightarrow \mathcal{R}(Y) / I\) defines the closed subscheme \(\Proj(\mathcal{R}(Y) / I) \subset \Proj(\mathcal{R}(Y)) = Y\). Its ideal sheaf can be identified with the ideal sheaf \(\varphi_Y(I)\) introduced above (for example, see~\cite[Proposition~II.5.9]{hartshorne/algebraic}).
\end{remark}

\begin{proposition}\label{proposition:MDS-smooth-correspondence}
  Let \(Y\) be a smooth Mori dream space, \(\mathcal{R}(Y)\) be its Cox ring, and \(B \subset \mathcal{R}(Y)\) be the irrelevant ideal.
  \begin{enumerate}
  \item Let \(I = (f_1, \ldots, f_m) \subset \mathcal{R}(Y)\) be a homogeneous ideal. Then the following decomposition holds: \(\varphi_Y(I) = \sum_{j = 1}^m \mathcal{O}_Y(-D_{f_j})\).
  \item For any homogeneous ideal \(I \subset \mathcal{R}(Y)\) we have \(\psi_Y(\varphi_Y(I)) = (I : B^{\infty})\), where \((I : B^{\infty})\) is the saturation of the ideal \(I\) with respect to \(B\):
    \[
      (I : B^{\infty}) = \bigcup_{l = 1}^{\infty} \{f \in \mathcal{R}(Y) \mid f B^{l} \subset I\}.
    \]
  \item For any ideal sheaf \(\mathcal{I} \subseteq \mathcal{O}_Y\) we have \(\varphi_Y(\psi_Y(\mathcal{I})) = \mathcal{I}\).
  \end{enumerate}
\end{proposition}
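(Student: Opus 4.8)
The plan is to reduce the whole statement to the characteristic space construction and then transport the classical \(\Proj\)--ideal dictionary. Since \(Y\) is a smooth Mori dream space with free divisor class group \(\Cl(Y) \cong \mathbb{Z}^{\rho}\), I would work with the total coordinate space \(\overline{X} = \Spec(\mathcal{R}(Y))\), equipped with the action of the torus \(H = \Spec(\mathbb{C}[\Cl(Y)])\) whose weight decomposition is the \(\Cl(Y)\)-grading. The irrelevant ideal \(B\) cuts out a closed subset \(\widehat{Z} = V(B) \subset \overline{X}\) of codimension at least~\(2\), and \(\pi \colon \widehat{Y} := \overline{X} \setminus \widehat{Z} \rightarrow Y\) is a geometric quotient (see~\cite[\nopp 1.6.1, 3.3.4]{arzhantsev/cox}). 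The key observation I would record first is that the subsheaf \(\mathcal{O}_Y(-D_f) \subseteq \mathcal{O}_Y\) is the image of multiplication by \(f\), namely \(\mathcal{O}_Y(-D) \xrightarrow{\,\cdot f\,} \mathcal{O}_Y\) with \([D] = \deg f\), and that under \(\pi\) it pulls back to the principal ideal generated by the homogeneous function \(f\) on \(\widehat{Y}\); consequently \(\pi^{*}\varphi_Y(I)\) is the restriction to \(\widehat{Y}\) of the ideal sheaf generated by \(I\) on \(\overline{X}\). With this dictionary in place, I expect parts~(1) and~(3) to be formal, and the genuine content to sit in part~(2).

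For part~(1), the inclusion \(\supseteq\) is immediate, since each \(f_j\) is a homogeneous element of \(I\). For \(\subseteq\) I would take a homogeneous \(f \in I\), write \(f = \sum_{j} h_j f_j\) with \(h_j\) homogeneous and \(\deg(h_j f_j) = \deg f\), and note that multiplication by the section \(h_j\) of \(\mathcal{O}_Y(\deg h_j)\) yields \(h_j \cdot \mathcal{O}_Y(-\deg f) \subseteq \mathcal{O}_Y(-\deg f_j)\). Therefore
\[
  \mathcal{O}_Y(-D_f) = f \cdot \mathcal{O}_Y(-\deg f) \subseteq \sum_{j} f_j \cdot \mathcal{O}_Y(-\deg f_j) = \sum_{j} \mathcal{O}_Y(-D_{f_j}),
\]
and summing over all homogeneous \(f \in I\) gives the claimed decomposition.

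For part~(3), I would pull an ideal sheaf \(\mathcal{I} \subseteq \mathcal{O}_Y\) back to the \(H\)-invariant ideal sheaf \(\pi^{*}\mathcal{I}\) on \(\widehat{Y}\). Because \(\widehat{Z}\) has codimension at least~\(2\) in the normal variety \(\overline{X}\), this ideal sheaf extends uniquely across \(\widehat{Z}\) and is recovered from its graded module of global sections, which is precisely \(\psi_Y(\mathcal{I})\). Applying \(\varphi_Y\) descends this module to the ideal sheaf it generates, returning \(\mathcal{I}\); here one uses that \(\pi\) is a geometric quotient, so that passing to invariant sections and generating ideal sheaves are mutually inverse on \(\widehat{Y}\) (this is the higher-rank version of~\cite[Proposition~II.5.9]{hartshorne/algebraic}, compare Remark~\ref{remark:MDS-one-closed-subschemes}).

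The crux is part~(2). For \((I : B^{\infty}) \subseteq \psi_Y(\varphi_Y(I))\) I would use that \(\widehat{Z}\) is discarded in forming \(Y\): if \(f B^{l} \subseteq I\), then \(\mathcal{O}_Y(-D_{gf}) \subseteq \varphi_Y(I)\) for every homogeneous \(g \in B^{l}\), and since the sections in \(B^{l}\) have no common zero on \(Y\) these subsheaves already generate \(\mathcal{O}_Y(-D_f)\), whence \(f \in \psi_Y(\varphi_Y(I))\). For the reverse inclusion, a homogeneous \(f\) with \(\mathcal{O}_Y(-D_f) \subseteq \varphi_Y(I)\) pulls back to a section of the ideal generated by \(I\) on \(\widehat{Y}\), and comparing its graded global sections with \(I \subseteq \mathcal{R}(Y) = \Gamma(\overline{X}, \mathcal{O}_{\overline{X}})\) shows that some power of \(B\) carries \(f\) into \(I\). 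The hard part will be exactly this last identification: it expresses saturation with respect to \(B\) as the failure of a homogeneous ideal to be reconstructed from its sheafification, and I expect it to rest on the codimension bound \(\codim_{\overline{X}}(\widehat{Z}) \geq 2\) together with the computation of \(\Gamma(\widehat{Y}, -)\) in terms of local cohomology supported on \(\widehat{Z}\) (see~\cite[\nopp 1.6.1, 3.3.4]{arzhantsev/cox} and~\cite[Definition~2.2]{pieropan/galois}).
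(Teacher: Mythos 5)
Your argument is correct in outline, but it is not the route the paper takes: the paper's entire proof is the observation that smoothness gives \(\Cl(Y)=\Pic(Y)\), so that \(\mathcal{R}(Y)\) is \(\Pic(Y)\)-graded, followed by a direct appeal to \cite[Proposition~3.1(1,2,3)]{pieropan/galois}. What you have written is, in effect, a re-derivation of that cited proposition via the characteristic space \(\pi\colon\widehat{Y}=\Spec(\mathcal{R}(Y))\setminus V(B)\to Y\). The trade-off is economy versus transparency: the paper defers all content to the reference, while your version makes visible where each hypothesis enters --- smoothness so that every \(\mathcal{O}_Y(D)\) is invertible and \(\pi\) is a torus torsor (hence pullback of subsheaves of \(\mathcal{O}_Y\) is well behaved and \(\pi^*\mathcal{O}_Y(-D_f)=f\cdot\mathcal{O}_{\widehat{Y}}\)), and the codimension-two irrelevant locus \(V(B)\) as the exact source of the saturation \((I:B^\infty)\) in part~(2). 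Your individual steps check out: the decomposition \(f=\sum_j h_jf_j\) into homogeneous pieces for part~(1), the argument with the common-zero-free sections of \(B^l\) for the inclusion \((I:B^\infty)\subseteq\psi_Y(\varphi_Y(I))\), and the localisation argument for the converse. Two points would need to be written out in full: in~(3), global generation of the pulled-back ideal sheaf uses that \(\widehat{Y}\) is quasi-affine, so that the sections extending to \(\Spec(\mathcal{R}(Y))\) --- which are exactly the homogeneous elements assembling into \(\psi_Y(\mathcal{I})\) --- already generate; and in~(2), to pass from ``\(g^{n_g}f\in I\) for each homogeneous \(g\in B\)'' to ``\(fB^l\subseteq I\) for a single \(l\)'' you need a finite generating set \(B=(g_1,\ldots,g_s)\) and noetherianity. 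Both are routine, and both are precisely the content of the result the paper cites, so your proposal amounts to a self-contained substitute for that citation rather than a shortcut past it.
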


\begin{proof}
  If \(Y\) is smooth, then \(\Cl(Y) = \Pic(Y)\), so the Cox ring \(\mathcal{R}(Y)\) is \(\Pic(Y)\)-graded. Now we can apply~\cite[Proposition~3.1(1,2,3)]{pieropan/galois}.
\end{proof}

\begin{remark}
  The irrelevant ideal \(B \subset \mathcal{R}(Y)\) can be explicitly described if one fixes an ample divisor \(D \in K\) on \(Y\) (see~\cite[Corollary~1.6.3.6]{arzhantsev/cox}), where the subgroup \(K \subset \WDiv(Y)\) was introduced in Definition~\ref{definition:cox-ring}.
\end{remark}

We are going to describe a similar correspondence for singular Mori dream spaces~\(Y\) with \(\Cl(Y) \simeq \mathbb{Z}\). Non-triviality of \(\Cl(Y) / \Pic(Y)\) imposes a notion of ``saturation'' for homogeneous ideals \(I \subset \mathcal{R}(Y)\) different from \((I : B^{\infty})\).

\begin{notation}\label{notation:saturation}
  Let \(Y\) be a Mori dream space with \(\Cl(Y) \simeq \mathbb{Z}\). For any homogeneous ideal \(I \subset \mathcal{R}(Y)\) we use the following notation:
  \[
    I^{\infty} = (\{x \in \mathcal{R}(Y) \mid(\mathcal{R}(Y) \cdot x)_{m \cdot k} \subset I
    \text{ for all } k \gg 0 \}), \; m = [\Cl(Y) : \Pic(Y)].
  \]
\end{notation}

\begin{remark}\label{remark:saturation}
  Note that if \(Y\) is smooth, then \(I^{\infty}\) equals to the usual saturation \((I : B^{\infty})\) with respect to the irrelevant ideal \(B \subset \mathcal{R}(Y)\). One can also check that any radical homogeneous ideal \(I \subset \mathcal{R}(Y)\) satisfies \(I = I^{\infty}\).
\end{remark}

\begin{proposition}\label{proposition:MDS-one-correspondence}
  Let \(Y\) be a Mori dream space with \(\Cl(Y) \simeq \mathbb{Z}\), let \(\mathcal{R}(Y)\) be its Cox ring, and let \(B \subset \mathcal{R}(Y)\) be the irrelevant ideal.
  \begin{enumerate}
  \item Let \(I = (f_1, \ldots, f_m) \subset \mathcal{R}(Y)\) be a homogeneous ideal. Then the following decomposition holds: \(\varphi_Y(I) = \sum_{j = 1}^m \mathcal{O}_Y(-D_{f_j})\).
  \item For any homogeneous ideal \(I \subset \mathcal{R}(Y)\) we have \(\psi_Y(\varphi_Y(I)) = I^{\infty}\).
  \item For any ideal sheaf \(\mathcal{I} \subseteq \mathcal{O}_Y\) we have \(\varphi_Y(\psi_Y(\mathcal{I})) = \mathcal{I}\).
  \end{enumerate}
\end{proposition}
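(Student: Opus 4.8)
The plan is to use the identification \(Y = \Proj(\mathcal{R}(Y))\) from Remark~\ref{remark:MDS-one-closed-subschemes}, under which \(\varphi_Y(I)\) is the ideal sheaf \(\widetilde{I}\) associated with the homogeneous ideal \(I\), and to reduce every assertion to a statement about global sections of twists by the ample \emph{line} bundle \(\mathcal{O}_Y(m)\). The reason for working with \(m = [\Cl(Y) : \Pic(Y)]\) is precisely that \(\mathcal{O}_Y(mk)\) is an honest line bundle for every \(k\) (it generates \(\Pic(Y)\)), so that twisting by it is exact and Serre's vanishing and global generation theorems apply. This is what replaces the smoothness hypothesis used in Proposition~\ref{proposition:MDS-smooth-correspondence} via \cite{pieropan/galois}, where \(\Cl(Y) = \Pic(Y)\) and all twists are already invertible.

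I would first prove (1), which is formal and insensitive to singularities. Writing \(\mathcal{O}_Y(-D_f)\) as the image of the multiplication map \(\mathcal{O}_Y(-\deg f) \xrightarrow{f} \mathcal{O}_Y\), any homogeneous \(f = \sum_j g_j f_j \in I\) factors this map through \(\sum_j \mathcal{O}_Y(-D_{f_j})\), giving \(\varphi_Y(I) \subseteq \sum_j \mathcal{O}_Y(-D_{f_j})\); the reverse inclusion is immediate since each generator \(f_j\) lies in \(I\).

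The technical heart is the following twisting lemma, which I would isolate and then apply twice: for coherent subsheaves \(\mathcal{F}, \mathcal{G} \subseteq \mathcal{O}_Y\) one has \(\mathcal{F} \subseteq \mathcal{G}\) if and only if \(H^0(Y, \mathcal{F}(mk)) \subseteq H^0(Y, \mathcal{G}(mk))\) for all \(k \gg 0\). The nontrivial direction follows by applying \(\Gamma_*\) to the image \(\mathcal{K}\) of \(\mathcal{F}\) in the quotient \(\mathcal{O}_Y / \mathcal{G}\): for \(k \gg 0\) the sheaf \(\mathcal{K}(mk)\) is globally generated and \(H^0(\mathcal{F}(mk)) \twoheadrightarrow H^0(\mathcal{K}(mk))\) by Serre vanishing for the kernel, while the hypothesis forces \(H^0(\mathcal{K}(mk)) = 0\), whence \(\mathcal{K} = 0\). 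For (3) I would then note that Serre's theorem makes \(\mathcal{I}(mk)\) globally generated for \(k \gg 0\) by sections \(s_i \in H^0(Y, \mathcal{I}(mk)) \subseteq H^0(Y, \mathcal{O}_Y(mk)) = \mathcal{R}_{mk}(Y)\); each \(s_i\) lies in \(\psi_Y(\mathcal{I})\) because \(\mathcal{O}_Y(-D_{s_i}) = \im(s_i) \subseteq \mathcal{I}\), and twisting the resulting surjection back down by the line bundle \(\mathcal{O}_Y(-mk)\) yields \(\mathcal{I} = \sum_i \mathcal{O}_Y(-D_{s_i}) \subseteq \varphi_Y(\psi_Y(\mathcal{I}))\), the opposite inclusion being immediate from (1).

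Finally, for (2) I would combine the twisting lemma with the computation of the relevant spaces of sections. On the one hand \(H^0(\mathcal{O}_Y(-D_g)(mk)) = g \cdot \mathcal{R}_{mk - \deg g}(Y) = (\mathcal{R}(Y) \cdot g)_{mk}\), using \(H^0(\mathcal{O}_Y(j)) = \mathcal{R}_j(Y)\); on the other hand \(H^0(\widetilde{I}(mk)) = (I : B^{\infty})_{mk}\) for \(k \gg 0\), and since \(I\) agrees with its \(B\)-saturation in all sufficiently high degrees (their quotient is supported on the irrelevant locus, hence vanishes in high degree), this equals \(I_{mk}\) for \(k \gg 0\). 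Thus \(g \in \psi_Y(\varphi_Y(I))\) if and only if \((\mathcal{R}(Y) \cdot g)_{mk} \subseteq I_{mk}\) for all \(k \gg 0\), which is exactly the condition defining the generators of \(I^{\infty}\) in Notation~\ref{notation:saturation}. I expect the main obstacle to be the twisting lemma together with the bookkeeping around it: one must be careful that inclusions of the merely reflexive sheaves \(\mathcal{O}_Y(-D_g)\) can only be tested after twisting into degrees divisible by \(m\), where \(\mathcal{O}_Y(mk)\) becomes a line bundle and the cohomological tools apply, and it is exactly this restriction that accounts for the multiples of \(m\) in the definition of \(I^{\infty}\).
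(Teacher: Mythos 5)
Your proposal is correct, but it takes a genuinely different route from the paper. The paper does not argue cohomologically at all: it reduces everything to \cite[Proposition~3.1]{pieropan/galois} (the \(\Pic\)-graded case already used in Proposition~\ref{proposition:MDS-smooth-correspondence}) by passing to the truncated Cox ring \(\mathcal{R}(Y)^{[m]}\), invoking the canonical isomorphism \(\Proj(\mathcal{R}(Y)) \simeq \Proj(\mathcal{R}(Y)^{[m]})\) from EGA, identifying each ideal \(I\) with its truncation \(\widetilde{I} = I \cap \mathcal{R}(Y)^{[m]}\), and checking that \(I = I^{\infty}\) if and only if \(\widetilde{I} = (\widetilde{I} : \widetilde{B}^{\infty})\). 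You instead give a self-contained proof from first principles: the formal computation for (1), a twisting lemma (testing inclusions of coherent subsheaves of \(\mathcal{O}_Y\) on global sections of high twists by the ample line bundle \(\mathcal{O}_Y(m)\), via Serre vanishing and global generation), and the identifications \(H^0(\mathcal{O}_Y(-D_g)(mk)) = (\mathcal{R}(Y)\cdot g)_{mk}\) and \(H^0(\varphi_Y(I)(mk)) = I_{mk}\) for \(k \gg 0\). Both arguments pivot on the same point — only the twists in degrees divisible by \(m = [\Cl(Y):\Pic(Y)]\) are invertible, which is exactly what forces the multiples of \(m\) in Notation~\ref{notation:saturation} — but the paper buys brevity by citation, whereas your version is essentially a direct reproof of Pieropan's lemma in the \(\Cl\)-graded setting and has the merit of making transparent \emph{why} \(I^{\infty}\) is the right notion of saturation. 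I see no gap; the only points deserving a word of care are the surjectivity of \(H^0(\mathcal{F}(mk)) \to H^0(\mathcal{K}(mk))\) in the twisting lemma (Serre vanishing for the kernel \(\mathcal{F} \cap \mathcal{G}\), which you note), and the harmless reduction of the generating set of \(I^{\infty}\) to its homogeneous members.
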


\begin{proof}
  We want to derive the statement from~\cite[Proposition~3.1]{pieropan/galois}, as was done in the proof of Proposition~\ref{proposition:MDS-smooth-correspondence}. Yet, we cannot apply it directly, because in our case the Cox ring \(\mathcal{R}(Y)\) is \(\Cl(Y)\)-graded, not \(\Pic(Y)\)-graded.

  We will need the following considerations.

  Firstly, let us recall that \(Y \simeq \Proj(\mathcal{R}(Y))\) (see~\cite[\nopp 3.3.4]{arzhantsev/cox}). We introduce the \emph{truncated} Cox ring, which is \(\Pic(Y)\)-graded:
  \[
    \mathcal{R}(Y)^{[m]} = \bigoplus_{k = 0}^{\infty} H^0(Y, \mathcal{O}_Y(m \cdot k)), \quad
    m = [\Cl(Y) : \Pic(Y)].
  \]
  Recall that \(\Proj(\mathcal{R}(Y))\) is canonically isomorphic to \(\Proj(\mathcal{R}(Y)^{[m]})\) (see~\cite[2.4.7(i)]{grothendieck/elements}). Then any ideal sheaf on \(Y\) can be canonically identified with an ideal sheaf on \(\widetilde{Y} := \Proj(\mathcal{R}(Y)^{[m]})\), and vice versa. In particular, for any ideal \(I \subset \mathcal{R}(Y)\) and its truncation \(\widetilde{I} = I \cap \mathcal{R}(Y)^{[m]}\) we can identify the ideal sheaves \(\phi_Y(I)\) and \(\phi_{\widetilde{Y}}(\widetilde{I})\).

  Secondly, let \(I \subset \mathcal{R}(Y)\) be an homogeneous ideal, and \(\widetilde{I} = I \cap \mathcal{R}(Y)^{[m]}\) be its image in the truncated Cox ring. It is not hard to check that \(I\) is saturated in the sense of Notation~\ref{notation:saturation} (i.e., we have \(I = I^{\infty}\)) if and only if \(\widetilde{I}\) is saturated in the usual sense with respect to the irrelevant ideal \(\widetilde{B} \subset \mathcal{R}(Y)^{[m]}\) (i.e., we have \(\widetilde{I} = (\widetilde{I} : \widetilde{B}^{\infty})\)) (for example, see the proof of~\cite[Lemma~2.10]{ovcharenko/classification}).

  Now we can prove the statement.
  \begin{enumerate}
  \item[(1)] Let us consider the truncation \(\widetilde{I} = I \cap \mathcal{R}(Y)^{[m]}\) of the ideal \(I \subset \mathcal{R}(Y)\). If \(\widetilde{I}\) is generated by \(g_1, \ldots, g_l \in \mathcal{R}(Y)^{[m]}\), then we have
\(\varphi_{\widetilde{Y}}(\widetilde{I}) = \sum_{j = 1}^l \mathcal{O}_{\widetilde{Y}}(-D_{g_j})\) by~\cite[Proposition~3.1(1)]{pieropan/galois}, where \(\widetilde{Y} = \Proj(\mathcal{R}(Y)^{[m]})\). Then the canonical isomorphism \(Y \simeq \widetilde{Y}\) implies that we have \(\varphi_Y(I) = \sum_{j = 1}^m \mathcal{O}_Y(-D_{f_j})\) as well.
\item[(3)] For any ideal sheaf \(\mathcal{I} \subseteq \mathcal{O}_Y\) we can identify the ideal \(\psi_{\widetilde{Y}}(\mathcal{I}) \subset \mathcal{R}(Y)^{[m]}\) with the truncation \(\widetilde{\psi_Y(\mathcal{I})} = \psi_Y(\mathcal{I}) \cap \mathcal{R}(Y)^{[m]}\) of the ideal \(\psi_Y(\mathcal{I})\). Then we can apply~\cite[Proposition~3.1(3)]{pieropan/galois}.

  \item[(2)] The ideal \(I \subset \mathcal{R}(Y)\) defines an ideal sheaf \(\varphi_Y(I)\) on \(Y\). After the canonical isomorphism \(Y \simeq \widetilde{Y}\) we can think of \(\varphi_Y(I)\) as an ideal sheaf on \(\widetilde{Y}\). According to~\cite[Proposition~3.1(2)]{pieropan/galois}, ideal sheaves on \(\widetilde{Y}\) are in bijection with the saturated ideals \(\widetilde{I} = (\widetilde{I} : \widetilde{B}^{\infty})\) in \(\mathcal{R}(Y)^{[m]}\). But we can identify \(\widetilde{I}\) with the truncation of the ideal \(\psi_Y(\varphi_Y(I))\), so we obtain that \(\psi_Y(\varphi_Y(I)) = I^{\infty}\). \qedhere
  \end{enumerate} 
\end{proof}

Propositions~\ref{proposition:MDS-smooth-correspondence} and~\ref{proposition:MDS-one-correspondence} motivate the following definitions.

\begin{definition}\label{definition:saturation}
  Let \(Y\) a Mori dream space, and \(I \subset \mathcal{R}(Y)\) be a homogeneous ideal. The \emph{saturation} of \(I\) is the ideal \((I : B^{\infty})\) if \(Y\) is smooth, and the ideal~\(I^{\infty}\) if \(\Cl(Y) \simeq \mathbb{Z}\) (see Remark~\ref{remark:saturation}). An ideal is \emph{saturated} if it equals its saturation.
\end{definition}

\begin{definition}
  Let \(X \subseteq Y\) be a closed subscheme in a Mori dream space such that \(Y\) is smooth, or \(\Cl(Y) \simeq \mathbb{Z}\). We refer to \(\psi_Y(\mathcal{I}_X) \subset \mathcal{R}(Y)\) as the \emph{defining ideal} of \(X\). We define the \emph{homogeneous coordinate ring} of \(X\) as the \(\Cl(Y)\)-graded ring \(\mathcal{R}_Y(X) = \mathcal{R}(Y) / \psi_Y(\mathcal{I}_X) = \oplus_{i \in \Cl(Y)} \mathcal{R}_Y(X)_i\).
\end{definition}

\subsubsection{Complete intersections in Mori dream spaces}

\begin{definition}
  Let \(Y\) be an irreducible normal variety. A \emph{complete intersection} of hypersurfaces \(H_1, \ldots, H_c \subset Y\) is a closed subscheme \(X \subseteq Y\) with \(\codim_Y(X) = c\) and the ideal sheaf equal to \(\mathcal{I}_X = \mathcal{I}_{H_{1}} + \cdots +\mathcal{I}_{H_c}\).
\end{definition}

\begin{definition}[see~{\cite[Definition~2.2]{pieropan/galois}}]
  Let \(Y\) be an irreducible normal variety with \(\Cl(Y) \cong \mathbb{Z}^{\rho}\). A complete intersection \(X \subseteq Y\) of hypersurfaces \(H_{1}, \ldots, H_{c}\) is \emph{strict} if its ideal \(\psi_Y(\mathcal{I}_X) \subset \mathcal{R}(Y)\) is generated by \(c\) elements.
\end{definition}

\begin{remark}\label{remark:CI-MDS-one}
  Let \(X \subset Y\) be a strict complete intersection in a Mori dream space with \(\Cl(Y) \simeq \mathbb{Z}\). The ideal \(\psi_Y(\mathcal{I}_X)\) is generated by \(\codim_Y(X)\) elements. Remark~\ref{remark:MDS-one-closed-subschemes} implies that we can identify \(X\) with \(\Proj(\mathcal{R}(Y) / \psi_Y(\mathcal{I}_X))\). Then we have
  \[
    \codim_Y(X) = \dim(Y) - \dim(X) =
    \dim(\mathcal{R}(Y)) - \dim(\mathcal{R}(Y) / \psi_Y(\mathcal{I}_X)) = \height(\psi_Y(\mathcal{I}_X)).
  \]
  Then \(\psi_Y(\mathcal{I}_X)\) is generated by a regular sequence by~\cite[Proposition~1.5.11]{bruns/cohenmacaulay}.
\end{remark}

\begin{lemma}\label{lemma:CI-MDS-strictness}
  Let \(Y\) be a Mori dream space, and \(X \subseteq Y\) be a complete intersection of hypersurfaces \(D_{f_1}, \ldots, D_{f_c}\) defined by homogeneous elements \(f_j \in \mathcal{R}(Y)\), and let \(B = (g_1, \ldots, g_s) \subset \mathcal{R}(Y)\) be the irrelevant ideal.

  Assume that one of the following conditions hold:
  \begin{itemize}
  \item \(Y\) is smooth, and \(s < \height(B)\);
  \item \(\Cl(Y) \simeq \mathbb{Z}\), and the ideal \((f_1, \ldots, f_c)\) is radical.
  \end{itemize}
  Then we have \(\psi_Y(\mathcal{I}_X) = (f_1, \ldots, f_c)\), and \(\height(\psi_Y(\mathcal{I}_X)) = c\). In particular, \(X\) is a strict complete intersection, and \((f_1, \ldots, f_c)\) is a regular sequence.  
\end{lemma}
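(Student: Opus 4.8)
The plan is to prove the two cases separately, since the hypotheses and the notions of saturation differ. In both cases the strategy is to show that the ideal $(f_1, \ldots, f_c)$ is already saturated, because then Proposition~\ref{proposition:MDS-smooth-correspondence} (respectively Proposition~\ref{proposition:MDS-one-correspondence}) identifies it with $\psi_Y(\mathcal{I}_X)$, and the height and regular-sequence claims follow formally. Indeed, by part~(1) of the relevant proposition we always have $\varphi_Y((f_1, \ldots, f_c)) = \sum_{j=1}^c \mathcal{O}_Y(-D_{f_j}) = \mathcal{I}_{D_{f_1}} + \cdots + \mathcal{I}_{D_{f_c}} = \mathcal{I}_X$, using the definition of a complete intersection. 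Applying $\psi_Y$ and then part~(2) gives $\psi_Y(\mathcal{I}_X) = \psi_Y(\varphi_Y((f_1, \ldots, f_c)))$, which is the saturation of $(f_1, \ldots, f_c)$. So the entire content of the lemma reduces to verifying that $(f_1, \ldots, f_c)$ equals its own saturation.

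In the case $\Cl(Y) \simeq \mathbb{Z}$ this is immediate from the hypotheses: by Remark~\ref{remark:saturation}, any radical homogeneous ideal satisfies $I = I^{\infty}$, so the assumption that $(f_1, \ldots, f_c)$ is radical gives saturation at once. Then $\psi_Y(\mathcal{I}_X) = (f_1, \ldots, f_c)$, and the height computation in Remark~\ref{remark:CI-MDS-one} yields $\height(\psi_Y(\mathcal{I}_X)) = \codim_Y(X) = c$; the regular-sequence conclusion follows from~\cite[Proposition~1.5.11]{bruns/cohenmacaulay} exactly as in that remark.

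In the smooth case the saturation is the usual $(I : B^{\infty})$, and I would argue that $(f_1, \ldots, f_c)$ is saturated by a dimension/height comparison with the irrelevant ideal $B = (g_1, \ldots, g_s)$. The key point is that passing from $(f_1, \ldots, f_c)$ to its saturation only discards primary components supported on the vanishing locus of $B$, i.e.\ primes containing $B$. A prime containing $B$ has height at least $\height(B) > s$. On the other hand, $(f_1, \ldots, f_c)$ is generated by $c$ elements, so by Krull's height theorem every associated prime of $(f_1, \ldots, f_c)$ has height at most $c$. Since $X$ is a genuine complete intersection of codimension $c$, the ideal $(f_1, \ldots, f_c)$ has a component of height exactly $c$, and one sees that all its minimal primes have height $c$ (so it is unmixed, $\mathcal{R}(Y)$ being Cohen--Macaulay as the Cox ring of a smooth Mori dream space). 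The hypothesis $s < \height(B)$ is designed precisely so that $c \le s < \height(B)$, forcing every associated prime of $(f_1, \ldots, f_c)$ to have height strictly less than $\height(B)$ and hence not to contain $B$; therefore saturating changes nothing and $(f_1, \ldots, f_c) = (f_1, \ldots, f_c) : B^{\infty} = \psi_Y(\mathcal{I}_X)$.

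The main obstacle I anticipate is the bookkeeping in this smooth case: one must be careful that the bound $c \le s$ is actually available (it should follow since $X$ nonempty of codimension $c$ forces $c \le \dim Y$, but the sharper inequality relating $c$ to $s$ via $s < \height(B)$ needs the complete-intersection hypothesis to guarantee no embedded or excess components sneak in above height $c$), and that no embedded primes of $(f_1, \ldots, f_c)$ contain $B$. This is where Cohen--Macaulayness of $\mathcal{R}(Y)$ is doing real work: it rules out embedded primes for an ideal generated by a system of parameters on the complete-intersection locus, so that the only possible ``extra'' components are minimal ones of height $c < \height(B)$. Once that unmixedness is secured, the height statement $\height(\psi_Y(\mathcal{I}_X)) = c$ and the regular-sequence conclusion follow from~\cite[Proposition~1.5.11]{bruns/cohenmacaulay} as before.
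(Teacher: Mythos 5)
Your global strategy is the paper's: use part~(1) of Propositions~\ref{proposition:MDS-smooth-correspondence} and~\ref{proposition:MDS-one-correspondence} to identify \(\varphi_Y(I)\) with \(\mathcal{I}_X\) for \(I=(f_1,\ldots,f_c)\), reduce everything to showing that \(I\) equals its saturation, and then get the height and regular-sequence claims from Remark~\ref{remark:CI-MDS-one} and \cite[Proposition~1.5.11]{bruns/cohenmacaulay}. Your treatment of the case \(\Cl(Y)\simeq\mathbb{Z}\) is exactly the paper's argument. The divergence is in the smooth case: the paper simply quotes \cite[Lemmas~4.1 and~4.2]{pieropan/galois} for the two facts \(\height(I)=c\) and ``\(I\) is saturated with respect to \(B\)'', whereas you unwind these into an explicit primary-decomposition argument. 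The unwinding is the right idea, but as written it has two genuine gaps.

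First, the numerical bookkeeping. What your argument actually needs is \(c<\height(B)\), so that no associated prime of \(I\) (all of height \(c\), granting unmixedness) can contain \(B\); you try to extract this from the stated hypothesis \(s<\height(B)\) via an unproved inequality \(c\le s\), and there is no such inequality in general. The underlying problem is that the hypothesis as literally stated is vacuous: by Krull's height theorem \(\height(B)\le s\) for an ideal with \(s\) generators (for \(Y=\mathbb{P}^n\) one has \(s=n+1=\height(B)\)), so ``\(s<\height(B)\)'' never holds, and the condition being used is really \(c<\height(B)\). You should either work directly with that inequality or flag the discrepancy, rather than bridge it with ``\(c\le s\)''. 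Second, unmixedness. Your initial claim that Krull's height theorem bounds \emph{every} associated prime of \(I\) by \(c\) is false (it bounds only the minimal primes), and the fallback you invoke --- the unmixedness theorem --- requires \(\mathcal{R}(Y)\) to be Cohen--Macaulay. Cohen--Macaulayness of the Cox ring of an arbitrary smooth Mori dream space is asserted but not justified, and is not obviously true in that generality (it does hold in the paper's actual applications: polynomial rings for weighted projective spaces, coordinate rings of cones over generalised Grassmannians). The paper sidesteps this by delegating the step to the cited lemmas of \cite{pieropan/galois}; a self-contained version must either establish Cohen--Macaulayness in the required generality or otherwise rule out embedded primes of \((f_1,\ldots,f_c)\) containing \(B\).
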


\begin{proof}
  Put \(I = (f_1, \ldots, f_c) \subset \mathcal{R}(Y)\).

  Assume that \(Y\) is smooth, and \(s < \height(B)\). By Proposition~\ref{proposition:MDS-smooth-correspondence}(1) we can identify \(\varphi_Y(I)\) with the ideal sheaf \(\mathcal{I}_X\). Accordingly to~\cite[Lemma~4.2]{pieropan/galois}, we have \(\height(I) = c\). Then by~\cite[Lemma~4.1]{pieropan/galois} it is saturated with respect to the irrelevant ideal \(B\). Proposition~\ref{proposition:MDS-smooth-correspondence}(2) implies that \(\psi_Y(\varphi_Y(I)) = I\). Consequently, we have \(\psi_Y(\mathcal{I}_X) = I\).

  Assume that \(\Cl(Y) \simeq \mathbb{Z}\), and \(I\) is a radical ideal. By Proposition~\ref{proposition:MDS-one-correspondence}(1) we can identify \(\varphi_Y(I)\) with the ideal sheaf \(\mathcal{I}_X\). It is not hard to check that a radical homogeneous ideal \(I \subset \mathcal{R}(Y)\) satisfies \(I = I^{\infty}\). Proposition~\ref{proposition:MDS-one-correspondence}(2) implies that \(\psi_Y(\varphi_Y(I)) = I\). Consequently, we have \(\psi_Y(\mathcal{I}_X) = I\). Remark~\ref{remark:MDS-one-closed-subschemes} implies that \(\Proj(\mathcal{R}(Y) / I) \simeq X\). From \(Y \simeq \Proj(\mathcal{R}(Y))\) (see~\cite[\nopp 3.3.4]{arzhantsev/cox}) we obtain that \(\height(I) = \dim(\mathcal{R}(Y)) - \dim(\mathcal{R}(Y) / I) = \codim_Y(X) = c\) (see Remark~\ref{remark:CI-MDS-one}).

  In both cases \(\psi_Y(\mathcal{I}_X) = I\) is generated by \(c\) elements, and \(\height(\psi_Y(\mathcal{I}_X)) = c\). Then \((f_1, \ldots, f_c)\) is a regular sequence by~\cite[Proposition~1.5.11]{bruns/cohenmacaulay}.
\end{proof}

\begin{remark}
  The assumptions of Lemma~\ref{lemma:CI-MDS-strictness} cannot be omitted.

  Consider a Mori dream space \(Y = \mathbb{P}^{1} \times \mathbb{P}^{1}\) with coordinates \((x_0 : x_1)\), \((y_0 : y_1)\), and the closed subscheme \(X = ((0 : 1), (0 : 1))\). Its defining ideal has the form  \(\psi_Y(\mathcal{I}_X) = (x_0, y_0) \subset \mathcal{R}(Y) = \mathbb{C}[x_0, x_1, y_0, y_1]\). But we can also define \(X\) by a non-saturated ideal \(I = (x_0, x_1 y_0)\) (see~\cite[Example~4.6]{pieropan/galois}).

  Consider a Mori dream space \(Y = \mathbb{P}(1, 1, 3, 6)\) with coordinates \((x_0, \ldots, x_3)\) and the homogeneous ideal \(I = (x_0^3, x_1^3) \subset \mathcal{R}(Y) = \mathbb{C}[x_0, x_1, x_2, x_3]\). Its saturation \(\psi_Y(\phi_Y(I)) = I^{\infty} = (x_0^3, x_1^3, x_0^2 x_1^2)\) cannot be generated by a regular sequence, hence \(I\) defines a non-strict complete intersection in \(Y\) (see~\cite{przyjalkowski/weighted}).
\end{remark}

\subsubsection{Application: conjugate complete intersections}

Let \(Y\) be a Mori dream space with \(\Cl(Y) \simeq \mathbb{Z}\), and let \(X \subseteq Y\) be a strict complete intersection. Then Remark~\ref{remark:CI-MDS-one} implies that \(\psi_Y(\mathcal{I}_X)\) is generated by a regular sequence of homogeneous elements.

\begin{definition}
  Let \(X \subseteq Y\) be a strict complete intersection in a Mori dream space with \(\Cl(Y) \simeq \mathbb{Z}\), so \(\psi_Y(\mathcal{I}_X) \subset \mathcal{R}(Y)\) is generated by a regular sequence \((f_1, \ldots, f_c)\) of homogeneous elements. Put \(d_j = \deg(f_j) \in \mathbb{Z}_{> 0}\).

  We refer to \((d_1, \ldots, d_c)\) as the \emph{multidegree} of the complete intersection \(X \subseteq Y\).
\end{definition}

\begin{definition}
  Let \(X = \Proj(\mathbb{C}[x_0, \ldots, x_d] / I_X)\) be a projective variety, and \(\sigma \in \Aut(\mathbb{C} / \mathbb{Q})\). The \emph{conjugate variety} of \(X\) is \(X^{\sigma} = \Proj(\mathbb{C}[x_0, \ldots, x_d] / I_X^{\sigma})\), where \(\Aut(\mathbb{C} / \mathbb{Q})\) acts on coefficients of polynomials in \(\mathbb{C}[x_0, \ldots, x_d]\).
\end{definition}

\begin{lemma}\label{lemma:CI-MDS-one-conjugation}
  Let \(Y\) be a complex Mori dream space with \(\Cl(Y) \simeq \mathbb{Z}\) which can be defined over~\(\mathbb{Q}\), and let \(X \subseteq Y\) be a strict complete intersection of multidegree \((d_1, \ldots, d_c)\). Then its conjugate \(X^{\sigma}\) is also a strict complete intersection in \(Y\) of the same multidegree \(\mu\) for any \(\sigma \in \Aut(\mathbb{C} / \mathbb{Q})\).
\end{lemma}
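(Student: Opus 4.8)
The plan is to realise the Galois conjugation as a degree-preserving automorphism of the Cox ring and to check that it preserves every piece of data recorded in the definition of a strict complete intersection. First I would fix a presentation of \(\mathcal{R}(Y)\) over \(\mathbb{Q}\). Since \(Y\) can be defined over \(\mathbb{Q}\) and \(\Cl(Y) \simeq \mathbb{Z}\), the Galois group acts on \(\Cl(Y) \simeq \mathbb{Z}\) preserving the (non-trivial, one-sided) cone of effective classes, hence trivially; therefore the ample generator of \(\Cl(Y)\), and with it each space \(H^0(Y, \mathcal{O}_Y(k))\), carries a natural \(\mathbb{Q}\)-structure. Consequently \(\mathcal{R}(Y) = \mathcal{R}(Y)_{\mathbb{Q}} \otimes_{\mathbb{Q}} \mathbb{C}\) as a graded ring, and any \(\sigma \in \Aut(\mathbb{C}/\mathbb{Q})\) acts on \(\mathcal{R}(Y)\) by \(\mathrm{id} \otimes \sigma\) as a degree-preserving, \(\mathbb{Q}\)-linear ring automorphism. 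Writing \(Y \simeq \Proj(\mathcal{R}(Y))\) (see~\cite[\nopp 3.3.4]{arzhantsev/cox}) via such a \(\mathbb{Q}\)-presentation, the defining ideal of \(Y\) is \(\sigma\)-invariant, so \(Y^{\sigma} \simeq Y\) and the conjugate \(X^{\sigma}\) is again a closed subscheme of \(Y\).

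Next I would transport the defining data of \(X\). By Remark~\ref{remark:CI-MDS-one}, strictness gives \(\psi_Y(\mathcal{I}_X) = (f_1, \ldots, f_c)\) with \((f_1, \ldots, f_c)\) a regular sequence of homogeneous elements and \(\deg(f_j) = d_j\); moreover, Proposition~\ref{proposition:MDS-one-correspondence}(2,3) shows the defining ideal is saturated, i.e. \(I := (f_1, \ldots, f_c) = I^{\infty}\). Using that \(X^{\sigma} = \Proj(\mathcal{R}(Y)/I^{\sigma})\) with \(I^{\sigma} = (f_1^{\sigma}, \ldots, f_c^{\sigma})\) (this is exactly the conjugation of a \(\mathbb{Q}\)-presentation of \(X\) inside the fixed \(\mathbb{Q}\)-model of \(\mathcal{R}(Y)\)), I would record three facts, each holding because \(\sigma\) is a graded ring automorphism: the elements \(f_j^{\sigma}\) are homogeneous with \(\deg(f_j^{\sigma}) = d_j\); the sequence \((f_1^{\sigma}, \ldots, f_c^{\sigma})\) is again regular, since \(\sigma\) induces ring isomorphisms \(\mathcal{R}(Y)/(f_1, \ldots, f_{i-1}) \xrightarrow{\sim} \mathcal{R}(Y)/(f_1^{\sigma}, \ldots, f_{i-1}^{\sigma})\) intertwining multiplication by \(f_i\) and by \(f_i^{\sigma}\); and \(\height(I^{\sigma}) = \height(I) = c\), with \(I^{\sigma} = (I^{\sigma})^{\infty}\), because the operation \((-)^{\infty}\) of Notation~\ref{notation:saturation} is defined purely in terms of the \(\mathbb{Z}\)-grading and multiplication (the index \(m\) being intrinsic to \(Y\)) and hence commutes with \(\sigma\).

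It then remains to conclude that \(X^{\sigma}\) is strict of multidegree \(\mu\). By Proposition~\ref{proposition:MDS-one-correspondence}(1) we have \(\varphi_Y(I^{\sigma}) = \sum_{j} \mathcal{O}_Y(-D_{f_j^{\sigma}})\), which is the ideal sheaf of the complete intersection of the hypersurfaces \(D_{f_1^{\sigma}}, \ldots, D_{f_c^{\sigma}}\), i.e. \(\mathcal{I}_{X^{\sigma}}\); since \(I^{\sigma}\) is saturated, Proposition~\ref{proposition:MDS-one-correspondence}(2) gives \(\psi_Y(\mathcal{I}_{X^{\sigma}}) = \psi_Y(\varphi_Y(I^{\sigma})) = (I^{\sigma})^{\infty} = I^{\sigma} = (f_1^{\sigma}, \ldots, f_c^{\sigma})\). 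Thus \(\psi_Y(\mathcal{I}_{X^{\sigma}})\) is generated by \(c\) elements, so \(X^{\sigma}\) is a strict complete intersection, and since \(\deg(f_j^{\sigma}) = d_j\) its multidegree is again \(\mu = (d_1, \ldots, d_c)\).

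I expect the main obstacle to be the bookkeeping at the interface between the scheme-theoretic conjugation \(X \mapsto X^{\sigma}\) and the ring-theoretic correspondence of Proposition~\ref{proposition:MDS-one-correspondence}: one must check that conjugating a \(\mathbb{Q}\)-presentation of \(X\) inside the fixed \(\mathbb{Q}\)-model of \(\mathcal{R}(Y)\) really produces the ideal \(I^{\sigma}\), and that \(\sigma\) commutes with the \emph{non-standard} saturation \((-)^{\infty}\) appropriate to \(\Cl(Y) \simeq \mathbb{Z}\) rather than with the naive saturation \((I : B^{\infty})\). Once it is clear that \(\sigma\) acts as an honest graded automorphism of \(\mathcal{R}(Y)\) preserving the grading, height, and the saturation operation, every individual verification is routine; note in particular that this argument does not require \(X\) to be reduced, so it is preferable to route through saturation and height rather than through the radicality criterion of Lemma~\ref{lemma:CI-MDS-strictness}.
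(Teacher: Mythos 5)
Your argument is correct and follows essentially the same route as the paper's proof: realise \(\sigma\) as a degree-preserving \(\mathbb{Q}\)-linear automorphism of \(\mathcal{R}(Y)\), observe that it sends \(\psi_Y(\mathcal{I}_X)=(f_1,\ldots,f_c)\) to \((f_1^{\sigma},\ldots,f_c^{\sigma})\) while preserving degrees, regularity of the sequence, and saturation, and then invoke Proposition~\ref{proposition:MDS-one-correspondence} to identify this with \(\psi_Y(\mathcal{I}_{X^{\sigma}})\). You are somewhat more explicit than the paper on two points the paper leaves implicit --- why \(\sigma\) acts trivially on \(\Cl(Y)\) (via the effective cone) and why it commutes with the non-standard saturation \((-)^{\infty}\) --- but the underlying argument is the same.
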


\begin{proof}
  By assumption \(X\) is a strict complete intersection, so its ideal \(\psi_Y(\mathcal{I}_X) \subset \mathcal{R}(Y)\) is generated by a regular sequence of homogeneous elements \((f_1, \ldots, f_c)\), where \(\deg(f_j) = d_j\) (see Remark~\ref{remark:CI-MDS-one}). Note that the action of \(\sigma \in \Aut(\mathbb{C} / \mathbb{Q})\) maps the ideal \(\psi_Y(\mathcal{I}_X) \subset \mathcal{R}(Y)\) to \(\psi_Y(\mathcal{I}_X)^{\sigma} = (f_1^{\sigma}, \ldots, f_c^{\sigma}) \subset \mathcal{R}(Y^{\sigma}) \simeq \mathcal{R}(Y)\).

  This action preserves the saturatedness (see Definition~\ref{definition:saturation}), because it preserves the irrelevant ideal and homogeneous components of \(\mathcal{R}(Y)\), and acts \(\mathbb{Q}\)-linearly on coefficients of any polynomial. Consequently, Proposition~\ref{proposition:MDS-one-correspondence} implies that \(\psi_Y(\mathcal{I}_X)^{\sigma} = \psi_Y(\mathcal{I}_{X^{\sigma}})\). It is not hard to check that the \(\mathbb{Q}\)-linearity of this action also preserves the property of the sequence \((f_1^{\sigma}, \ldots, f_c^{\sigma})\) to be regular. 
\end{proof}

\subsection{Weighted complete intersections}\label{subsection:WCI}

Here we remind the basic properties of weighted complete intersections. We refer the reader to Subsection~\ref{subsection:CI-generalities} for the terminology and notations related to Cox rings.

\begin{notation}
  Let \(\rho = (a_0, \ldots, a_N)\) be a tuple of positive integers. We also put \(R^{\rho} = \mathbb{C}[X_0, \ldots, X_N]\), where the grading \(R^{\rho} = \oplus_{n = 0}^{\infty} R_n^{\rho}\) is defined by \(\deg(X_i) = a_i\).
\end{notation}

\begin{definition}
  Let \(\rho = (a_0, \ldots, a_N)\) be a tuple of positive integers. We refer to \(\mathbb{P}(\rho) = \Proj(R^{\rho})\) as the \emph{weighted projective space with weights \(\rho\)}.
\end{definition}

\begin{definition}[{\cite[Definition~5.11]{ianofletcher/weighted}}]
  A weighted projective space \(\mathbb{P}(\rho)\), where \(\rho = (a_0, \ldots, a_N)\), is said to be \emph{well-formed} if
  \[
    \gcd(a_0, \ldots, a_{i-1}, \widehat{a_i}, a_{i + 1}, \ldots, a_N) = 1, \quad i = 0, \ldots, N.
  \]
\end{definition}

\begin{proposition}[{\cite[\nopp 1.3.1]{dolgachev/weighted}}]
  Any weighted projective space is isomorphic to a well-formed one.
\end{proposition}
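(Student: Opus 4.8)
The plan is to reduce the statement to two elementary operations on the weight vector $\rho = (a_0, \ldots, a_N)$, each of which manifestly preserves the isomorphism class of $\mathbb{P}(\rho)$, and then to run a descent on $\sum_i a_i$. Throughout I would use the quotient description $\mathbb{P}(\rho) = (\mathbb{A}^{N+1} \setminus \{0\}) / \mathbb{G}_m$, where $t \in \mathbb{G}_m$ acts by $t \cdot (x_0, \ldots, x_N) = (t^{a_0} x_0, \ldots, t^{a_N} x_N)$.

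The first operation is global rescaling: for any $m \in \mathbb{Z}_{>0}$ one has $\mathbb{P}(m a_0, \ldots, m a_N) \cong \mathbb{P}(a_0, \ldots, a_N)$. This is immediate on the level of graded rings, since $R^{m\rho}$ is obtained from $R^{\rho}$ by dilating the grading $k \mapsto m k$, so the homogeneous degree-zero localisations $(R^{\rho}_f)_0$ — and hence $\Proj$ — are unaffected. Applying this with $m = \gcd(a_0, \ldots, a_N)$, I may assume from now on that $\gcd(a_0, \ldots, a_N) = 1$.

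The second operation is the crux. Fix an index $i$ and set $d = \gcd(a_0, \ldots, \widehat{a_i}, \ldots, a_N)$; I claim that if $\gcd(a_i, d) = 1$, then dividing every weight except $a_i$ by $d$ yields an isomorphic space. After relabelling take $i = 0$, so $d \mid a_j$ for all $j \geq 1$. The finite subgroup $\mu_d \subset \mathbb{G}_m$ acts, through the $\rho$-action, trivially on $x_1, \ldots, x_N$ (as $\zeta^{a_j} = 1$ for $\zeta \in \mu_d$) and on $x_0$ by $\zeta \mapsto \zeta^{a_0}$, which runs over all of $\mu_d$ because $\gcd(a_0, d) = 1$. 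Hence $(\mathbb{A}^{N+1} \setminus \{0\}) / \mu_d \cong \mathbb{A}^{N+1} \setminus \{0\}$ via $(x_0, x_1, \ldots, x_N) \mapsto (y_0, x_1, \ldots, x_N)$ with $y_0 = x_0^d$. Passing to $\mathbb{G}_m / \mu_d \cong \mathbb{G}_m$, parametrised by $s = t^d$, a direct computation gives the residual action $s \cdot (y_0, x_1, \ldots, x_N) = (s^{a_0} y_0, s^{a_1/d} x_1, \ldots, s^{a_N/d} x_N)$, which is exactly the action defining $\mathbb{P}(a_0, a_1/d, \ldots, a_N/d)$. Therefore $\mathbb{P}(\rho) \cong \mathbb{P}(a_0, a_1/d, \ldots, a_N/d)$.

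Finally I would combine these. Having arranged $\gcd(a_0, \ldots, a_N) = 1$, observe that for every index $i$ one has $\gcd(a_i, d_i) = \gcd(a_0, \ldots, a_N) = 1$, where $d_i = \gcd(a_0, \ldots, \widehat{a_i}, \ldots, a_N)$; so the second operation applies whenever $d_i > 1$, which is precisely when well-formedness fails at $i$. Each application divides the $N$ weights other than $a_i$ by $d_i > 1$, strictly decreasing $\sum_j a_j$, and a short check shows it preserves $\gcd(a_0, \ldots, a_N) = 1$ (any common divisor of the new weights, multiplied by $d_i$, would divide all old $a_j$ with $j \neq i$, hence divide $d_i$, forcing it to be $1$). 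The descent therefore terminates, and it can only halt at a well-formed weight vector, which proves the claim. The main obstacle is the second operation; once the $\mu_d$-quotient and the substitution $y_0 = x_0^d$ are in place, everything else is bookkeeping.
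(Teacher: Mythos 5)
Your proof is correct and follows the same two-step reduction (divide out the overall gcd, then successively remove the gcds $d_i$ of all weights but one, with a descent on $\sum_j a_j$) that underlies the cited result \cite[1.3.1]{dolgachev/weighted}, which the paper quotes without proof; the only difference is that you verify the key second operation via the geometric quotient by $\mu_d \subset \mathbb{G}_m$ rather than via the Veronese-type isomorphism of graded rings $R^{(d)} \cong \mathbb{C}[x_0^d, x_1, \ldots, x_N]$, and the two verifications are equivalent.
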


\begin{lemma}[{\cite[\nopp 1.4.1]{dolgachev/weighted}}]
  Let \(\mathbb{P}(\rho)\) be a well-formed weighted projective space. Then we have \(\mathcal{R}(\mathbb{P}(\rho)) \simeq R^{\rho}\).
\end{lemma}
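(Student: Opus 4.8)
The plan is to compute the Cox ring one graded piece at a time by exploiting the presentation of \(\mathbb{P}(\rho)\) as a quotient of an affine space. First I would record that, for a well-formed \(\mathbb{P}(\rho)\), one has \(\Cl(\mathbb{P}(\rho)) \simeq \mathbb{Z}\) with the class \(h = [\mathcal{O}_{\mathbb{P}(\rho)}(1)]\) as a generator; this is exactly where the well-formedness hypothesis is genuinely needed, since for a non-well-formed \(\mathbb{P}(\rho)\) the generator of \(\Cl\) need not be represented by \(\mathcal{O}(1)\), and the two gradings would then fail to match (as already \(\mathbb{P}(2,2) \simeq \mathbb{P}^1\) shows). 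With this in hand the defining sum for the Cox ring becomes
\[
  \mathcal{R}(\mathbb{P}(\rho)) = \bigoplus_{n \in \mathbb{Z}} H^0\bigl(\mathbb{P}(\rho), \mathcal{O}_{\mathbb{P}(\rho)}(n)\bigr),
\]
so it suffices to construct, for every \(n \in \mathbb{Z}\), a natural isomorphism \(H^0(\mathbb{P}(\rho), \mathcal{O}_{\mathbb{P}(\rho)}(n)) \simeq R_n^{\rho}\) that is compatible with multiplication.

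The main tool is the standard presentation \(\pi \colon \mathbb{A}^{N+1} \setminus \{0\} \rightarrow \mathbb{P}(\rho)\) realising \(\mathbb{P}(\rho)\) as the geometric quotient of the \(\mathbb{C}^{\ast}\)-action \(\lambda \cdot (X_0, \ldots, X_N) = (\lambda^{a_0} X_0, \ldots, \lambda^{a_N} X_N)\). Under this presentation the divisorial sheaf \(\mathcal{O}_{\mathbb{P}(\rho)}(n)\) is the descent of the weight-\(n\) eigensheaf, so that its global sections are precisely the weight-\(n\) semi-invariant regular functions on the punctured cone:
\[
  H^0\bigl(\mathbb{P}(\rho), \mathcal{O}_{\mathbb{P}(\rho)}(n)\bigr) \simeq
  \bigl(H^0(\mathbb{A}^{N+1} \setminus \{0\}, \mathcal{O})\bigr)_n.
\]
Assuming \(N \geqslant 1\) (the case \(N = 0\) being a point), the removed origin has codimension \(N + 1 \geqslant 2\) in the smooth, hence normal, variety \(\mathbb{A}^{N+1}\), so algebraic Hartogs gives \(H^0(\mathbb{A}^{N+1} \setminus \{0\}, \mathcal{O}) = \mathbb{C}[X_0, \ldots, X_N] = R^{\rho}\), whose weight-\(n\) component is by definition \(R_n^{\rho}\). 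Summing over \(n\), and noting that multiplication of sections corresponds to multiplication of semi-invariants and therefore to the polynomial multiplication in \(R^{\rho}\), yields the desired graded isomorphism \(\mathcal{R}(\mathbb{P}(\rho)) \simeq R^{\rho}\).

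I expect the main obstacle to be the middle step: checking that the reflexive sheaf \(\mathcal{O}_{\mathbb{P}(\rho)}(n)\) attached to \(n h \in \Cl(\mathbb{P}(\rho))\) really coincides with the descent of the weight-\(n\) eigensheaf, rather than differing from it by a twist concentrated on the singular locus. This is where well-formedness does its second job, forcing \(\codim_{\mathbb{P}(\rho)}(\Sing(\mathbb{P}(\rho))) \geqslant 2\): reflexive sheaves on \(\mathbb{P}(\rho)\) are then determined by their restriction to the smooth locus \(U\), over which \(\pi\) restricts to an honest principal \(\mathbb{C}^{\ast}\)-bundle (the generic stabiliser being trivial by well-formedness), so that the eigensheaf description of \(\mathcal{O}_{\mathbb{P}(\rho)}(n)|_U\) is transparent. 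Since \(\pi^{-1}(U)\) omits only \(\pi^{-1}(\Sing(\mathbb{P}(\rho)))\) and the origin, both of codimension at least \(2\) in \(\mathbb{A}^{N+1}\), the Hartogs extension above applies verbatim, and both sides of the claimed identity are reflexive sheaves agreeing on \(U\), hence agreeing globally.
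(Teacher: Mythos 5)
The paper does not prove this lemma at all --- it is imported verbatim from Dolgachev (the cited~1.4.1), so there is no in-text argument to compare against. Your proof is essentially correct and is the standard way to establish the fact: it is the computation of the Cox ring of a toric variety \`{a} la Cox, specialised to \(\mathbb{P}(\rho)\), via the quotient presentation \(\pi\colon \mathbb{A}^{N+1}\setminus\{0\}\to\mathbb{P}(\rho)\) and algebraic Hartogs on the punctured cone. By contrast, Dolgachev works with the \(\Proj\) definition \(\mathcal{O}_{\mathbb{P}(\rho)}(n)=\widetilde{R^{\rho}(n)}\) and computes \(\Gamma_*\) module-theoretically; the two routes meet at the same two pressure points, both of which you correctly isolate: (i) identifying the generator of \(\Cl(\mathbb{P}(\rho))\) with the class of \(\mathcal{O}_{\mathbb{P}(\rho)}(1)\), so that the \(\Cl\)-grading of the Cox ring matches the weight grading of \(R^{\rho}\) (this is exactly what fails for \(\mathbb{P}(2,2)\) or \(\mathbb{P}(1,2,2)\)), and (ii) showing that the reflexive sheaf of \(n\) times that generator agrees with the descended weight-\(n\) eigensheaf, which you reduce to agreement on a big open set. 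Two small points you could tighten: the locus over which \(\pi\) is an honest principal \(\mathbb{C}^{\ast}\)-bundle is the image of the free locus, which need not literally coincide with the smooth locus a priori --- but well-formedness (each prime fails to divide at least two weights) forces the non-free locus in \(\mathbb{A}^{N+1}\setminus\{0\}\), and hence its image, to have codimension at least \(2\), which is all your Hartogs step needs; and assertion (i) deserves one line of toric bookkeeping (\(V(X_i)\sim a_i H\) with \(\gcd(a_i)=1\)) rather than being recorded as known. Neither is a gap in substance.
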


\begin{definition}
  Let \(\mathbb{P}(\rho)\) be a weighted projective space. A closed subscheme \(X \subseteq \mathbb{P}(\rho)\) is a \emph{weighted complete intersection} of multidegree \(\mu = (d_1, \ldots, d_c)\) if its ideal \(\psi_Y(\mathcal{I}_X) \subset \mathcal{R}(\mathbb{P}(\rho))\) is generated by a regular sequence \((f_1, \ldots, f_c)\) of weighted homogeneous polynomials of degrees \(\deg(f_j) = d_j \in \mathbb{Z}_{> 0}\).
\end{definition}

\begin{definition}[see~{\cite[Definition~2.1, Proposition~2.3]{sano/hypersurfaces}}]
  Let \(X \subseteq \mathbb{P}(\rho)\) be a weighted complete intersection, and \(\mathcal{R}_Y(X) = \mathcal{R}(Y) / \psi_Y(\mathcal{I}_X)\) be its homogeneous coordinate ring. We define the sheaf \(\mathcal{O}_X(k)\) as the coherent sheaf on \(X\) associated to the graded \(\mathcal{R}_Y(X)\)-module \(R(k)\) whose degree \(i\) part is \(R(k)_i = \mathcal{R}_Y(X)_{k + i}\).
\end{definition}

\begin{definition}[{\cite[Definition~6.3]{ianofletcher/weighted}}]
  A closed subscheme \(X \subseteq \mathbb{P}(\rho)\) is said to be \emph{quasi-smooth} if its affine cone \(\Spec(R^{\rho} / \psi_Y(\mathcal{I}_X))\) is smooth outside the origin, where \(\psi_Y(\mathcal{I}_X) \subset R^{\rho}\) is the defining ideal.
\end{definition}

\begin{definition}[cf.~{\cite[Definition~1.1]{dimca/singularities}}]\label{definition:WF-subscheme}
  A closed subscheme \(X \subseteq \mathbb{P}(\rho)\) is said to be \emph{well-formed} if \(\mathbb{P}(\rho)\) is well-formed, and \(\codim_X(X \cap \Sing(\mathbb{P}(\rho))) \geqslant 2\).
\end{definition}

\begin{proposition}[see~{\cite[Proposition~8]{dimca/singularities}} and~{\cite[Corollary~2.14]{przyjalkowski/bounds}}]\label{proposition:WCI-smooth-WF-criterion}
  Let \(X \subseteq \mathbb{P}(\rho)\) be a weighted complete intersection. The following assertions are equivalent:
  \begin{itemize}
  \item \(X\) is smooth and well-formed;
  \item \(\mathbb{P}(\rho)\) is well-formed, \(X\) is quasi-smooth, and \(X \cap \Sing(\mathbb{P}(\rho)) = \varnothing\).
  \end{itemize}
\end{proposition}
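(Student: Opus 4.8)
The plan is to pass to the quotient presentation \(\pi \colon \mathbb{A}^{N+1} \setminus \{0\} \to \mathbb{P}(\rho)\) by the \(\mathbb{G}_m\)-action with weights \(\rho\), under which the punctured affine cone \(C_X^{\ast} = \Spec(R^{\rho} / \psi_Y(\mathcal{I}_X)) \setminus \{0\}\) is exactly \(\pi^{-1}(X)\), so that quasi-smoothness of \(X\) means precisely the smoothness of \(C_X^{\ast}\). The key input I would isolate first is the standard description of \(\Sing(\mathbb{P}(\rho))\) for a well-formed \(\mathbb{P}(\rho)\): it is the image under \(\pi\) of the locus of points with non-trivial \(\mathbb{G}_m\)-stabiliser, and over its complement \(\mathbb{P}(\rho) \setminus \Sing(\mathbb{P}(\rho))\) the action is free, so that \(\pi\) restricts there to a \(\mathbb{G}_m\)-torsor, in particular a smooth surjection. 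Note also that in both assertions \(\mathbb{P}(\rho)\) is well-formed --- this is built into Definition~\ref{definition:WF-subscheme} on the one side and assumed on the other --- so this hypothesis is common to both and needs no proof.

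For the implication ``quasi-smooth and disjoint from \(\Sing(\mathbb{P}(\rho))\)'' \(\Rightarrow\) ``smooth and well-formed'', well-formedness of \(X\) is immediate, since \(X \cap \Sing(\mathbb{P}(\rho)) = \varnothing\) makes the codimension condition of Definition~\ref{definition:WF-subscheme} hold vacuously. For smoothness I would restrict \(\pi\) to \(C_X^{\ast} = \pi^{-1}(X)\): as \(X\) lies in the locus where \(\pi\) is a \(\mathbb{G}_m\)-torsor, the map \(\pi|_{C_X^{\ast}} \colon C_X^{\ast} \to X\) is a smooth surjection, and smoothness of its source (i.e.\ quasi-smoothness of \(X\)) descends to smoothness of the base \(X\).

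For the converse, assume \(X\) is smooth and well-formed. Then \(\mathbb{P}(\rho)\) is well-formed by Definition~\ref{definition:WF-subscheme}, and the whole content is to prove \(X \cap \Sing(\mathbb{P}(\rho)) = \varnothing\); granting this, \(X\) lies again in the free locus, and the same torsor now turns smoothness of \(X\) into smoothness of \(C_X^{\ast}\), i.e.\ quasi-smoothness. Establishing the disjointness is the step where the hypotheses genuinely interact and, I expect, the main obstacle. I would argue by contradiction: \(\Sing(\mathbb{P}(\rho))\) is a union of coordinate strata along which \(\mathbb{P}(\rho)\) carries cyclic quotient singularities, and a local analysis at a hypothetical point \(p \in X \cap \Sing(\mathbb{P}(\rho))\) shows that if \(X\) is smooth at \(p\) then \(X\) meets the singular stratum through \(p\) in codimension at most one --- precisely the configuration excluded by well-formedness. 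The prototype to keep in mind is \(X = \{x_0 = 0\} \subset \mathbb{P}(1,1,2)\), which is smooth and quasi-smooth yet meets the singular point in codimension one and so fails to be well-formed. Rather than reproduce this transversality computation at the quotient singularities, I would invoke Dimca's description of the singularities of quasi-smooth weighted complete intersections in \cite[Proposition~8]{dimca/singularities} together with \cite[Corollary~2.14]{przyjalkowski/bounds}, which package exactly the dichotomy between smoothness, well-formedness and avoidance of \(\Sing(\mathbb{P}(\rho))\).
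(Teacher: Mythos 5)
The paper offers no proof of this proposition at all: it is quoted from the literature, and the two references in its header \emph{are} the proof. Your sketch is consistent with that. The parts you actually carry out are correct and standard: over the free locus of the \(\mathbb{G}_m\)-action on \(\mathbb{A}^{N+1}\setminus\{0\}\) the quotient map is a smooth surjection onto the smooth locus of a well-formed \(\mathbb{P}(\rho)\), so smoothness passes back and forth between \(X\) and the punctured affine cone, and well-formedness of \(X\) is vacuous once \(X \cap \Sing(\mathbb{P}(\rho)) = \varnothing\); and for the genuinely hard step you cite exactly the two sources the paper cites. One caution about your framing of that hard step: Dimca's Proposition~8 computes \(\Sing(X) = X \cap \Sing(\mathbb{P}(\rho))\) only under the hypothesis that \(X\) is \emph{quasi-smooth} and well-formed, so it cannot by itself deliver the disjointness for a merely smooth well-formed \(X\) --- the implication ``smooth and well-formed \(\Rightarrow\) quasi-smooth'' is the real content (the paper's own example of a smooth but non-quasi-smooth sextic in \(\mathbb{P}(2,3,5^{(t)})\) shows how delicate this is), and it is supplied by \cite[Corollary~2.14]{przyjalkowski/bounds}, not by the codimension-one transversality heuristic you sketch, which I do not think is the correct local mechanism. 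Since you do invoke that corollary in the end, your argument closes up and is essentially the same as the paper's citation-as-proof.
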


\begin{example}
  If a weighted complete intersection is not well-formed, various pathologies can arise.

  \begin{itemize}
  \item There exists a K3 surface which can be realised as a smooth and quasi-smooth hypersurface \(X\) of degree 9 in \(\mathbb{P}(1, 2, 2, 3)\) which is not well-formed (see~\cite[\nopp 6.15(ii)]{ianofletcher/weighted}). The naive adjunction formula (see Proposition~\ref{proposition:WCI-QS-WF-adjunction}) would imply that \(\omega_X \simeq \mathcal{O}_X(1)\), which is nonsense.
  \item A general weighted hypersurface of multidegree 6 in \(\mathbb{P}(2, 3, 5^{(t)})\) is not well-formed or quasi-smooth for any \(t > 0\); nonetheless, it is smooth (see~\cite[Example~2.9]{przyjalkowski/on-automorphisms}).
  \end{itemize}
\end{example}

\begin{lemma}[{\cite[Corollary~3.3]{przyjalkowski/automorphisms}}]\label{lemma:WCI-QS-WF-projective-normality}
  Let \(X \subseteq \mathbb{P}(\rho)\) be a quasi-smooth well-formed weighted complete intersection. The restriction map is surjective for any \(m \in \mathbb{Z}_{\geqslant 0}\):
  \[
    \res \colon H^0 (\mathbb{P}(\rho), \mathcal{O}_{\mathbb{P}(\rho)}(m)) \rightarrow H^0(X, \mathcal{O}_X(m)).
  \]
\end{lemma}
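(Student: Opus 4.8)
The plan is to show that, after identifying global sections of the twisting sheaves with graded pieces of the relevant coordinate rings, the map $\res$ becomes the tautological quotient $R^\rho_m \twoheadrightarrow (R^\rho/I)_m$; the only genuine content is that passing from graded rings to sheaf cohomology introduces no correction in the low degrees.

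First I would collect the two algebraic inputs. By the definition of a weighted complete intersection the defining ideal $I = \psi_Y(\mathcal{I}_X) \subset R^\rho$ is generated by a regular sequence $(f_1, \dots, f_c)$, so the homogeneous coordinate ring $S = \mathcal{R}_Y(X) = R^\rho/I$ is a graded complete intersection, hence Cohen--Macaulay, with $\dim S = \dim X + 1$. For the ambient space, well-formedness of $\mathbb{P}(\rho)$ gives $\mathcal{R}(\mathbb{P}(\rho)) \simeq R^\rho$ by the quoted lemma of Dolgachev, and together with the definition of the Cox ring this yields $H^0(\mathbb{P}(\rho), \mathcal{O}_{\mathbb{P}(\rho)}(m)) = R^\rho_m$ for every $m$.

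Next I would invoke the standard graded local cohomology exact sequence for a positively graded ring $A$, finitely generated over $A_0 = \mathbb{C}$ with irrelevant maximal ideal $\mathfrak{m}$: in degree $m$ one has
\[
  0 \to H^0_{\mathfrak{m}}(A)_m \to A_m \to H^0(\Proj A, \mathcal{O}_{\Proj A}(m)) \to H^1_{\mathfrak{m}}(A)_m \to 0,
\]
together with $H^i(\Proj A, \mathcal{O}_{\Proj A}(m)) \simeq H^{i+1}_{\mathfrak{m}}(A)_m$ for $i \geq 1$. Since $\mathcal{O}_X(m)$ is by definition the sheafification of the graded module $S(m)$, this applies to $A = S$ with $\Proj S = X$. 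The Cohen--Macaulay property now does the work: for such $A$ one has $\operatorname{depth}_{\mathfrak{m}}(A) = \dim A$, so $H^i_{\mathfrak{m}}(A) = 0$ for all $i < \dim A$. As $\dim S = \dim X + 1 \geq 2$, both $H^0_{\mathfrak{m}}(S)$ and $H^1_{\mathfrak{m}}(S)$ vanish, and the sequence collapses to a canonical isomorphism $S_m \xrightarrow{\sim} H^0(X, \mathcal{O}_X(m))$, compatible with restriction of sections. Under this identification and the one above, $\res$ becomes the quotient map $R^\rho_m \twoheadrightarrow (R^\rho/I)_m = S_m$, which is visibly surjective.

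The main obstacle is making sure the local-cohomology/sheaf-cohomology sequence and the identity $\operatorname{depth} = \dim$ for Cohen--Macaulay rings are applied correctly in the $\mathbb{Z}$-graded weighted setting rather than the standard-graded one; this is precisely where well-formedness of $\mathbb{P}(\rho)$ enters, guaranteeing $\mathcal{R}(\mathbb{P}(\rho)) = R^\rho$ so that the sections of $\mathcal{O}_{\mathbb{P}(\rho)}(m)$ really are $R^\rho_m$ and not a larger reflexive hull. Note that quasi-smoothness is not strictly needed for surjectivity, only the regular-sequence hypothesis; it would be what guarantees the stronger Serre-type vanishing of the intermediate groups $H^i(X, \mathcal{O}_X(m))$. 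I would also flag that positivity of $\dim X$ is essential: for $\dim X = 0$ one has $\dim S = 1$ and $H^1_{\mathfrak{m}}(S)$ need not vanish, so surjectivity can fail — harmless here, since $\dim X \geq 3$ throughout.
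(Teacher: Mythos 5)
The paper gives no proof of this lemma at all---it is imported verbatim from the cited reference \cite{przyjalkowski/automorphisms}---and your argument is the standard one underlying that result: the complete-intersection ring \(S = R^{\rho}/I\) is Cohen--Macaulay, so the graded local-cohomology sequence identifies \(S_m\) with \(H^0(X, \mathcal{O}_X(m))\) and \(R^{\rho}_m\) with \(H^0(\mathbb{P}(\rho), \mathcal{O}_{\mathbb{P}(\rho)}(m))\), under which \(\res\) becomes the tautological surjection \(R^{\rho}_m \twoheadrightarrow S_m\). Your caveat about \(\dim(X) = 0\) (where \(H^1_{\mathfrak{m}}(S)\) need not vanish) is a genuine restriction on the statement as literally written, but it is harmless here since the lemma is only ever applied to varieties of dimension at least 2.
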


\begin{definition}[{\cite[Definition~6.5]{ianofletcher/weighted}}]
  Let \(X \subseteq \mathbb{P}(a_0, \ldots, a_N)\) be a weighted complete intersection of multidegree \((d_1, \ldots, d_c)\). We refer to \(X\) as
  \begin{itemize}
  \item \emph{an intersection with a linear cone} if we have \(a_i = d_j\) for some \(i\) and \(j\);
  \item \emph{degenerate} if we have \(d_j = 1\) for some \(j = 1, \ldots, c\).
\end{itemize}
\end{definition}

\begin{lemma}[{\cite[Theorem~3.4.4]{dolgachev/weighted}}, {\cite[Lemma~7.1]{ianofletcher/weighted}}]\label{lemma:WCI-QS-WF-O1-dimension}
  Let \(X \subseteq \mathbb{P}(\rho)\) be a quasi-smooth well-formed weighted complete intersection which is non-degenerate. Put \(\rho = (a_0, \ldots, a_N)\). The following identities hold:
  \[
    \dim(H^0(X, \mathcal{O}_X(1))) = \dim(H^0(\mathbb{P}(\rho), \mathcal{O}_{\mathbb{P}(\rho)}(1))) = \vert \{i \mid a_i = 1\} \vert.
  \]
\end{lemma}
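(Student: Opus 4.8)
The plan is to realise both equalities through the restriction map
\[
  \res \colon H^0(\mathbb{P}(\rho), \mathcal{O}_{\mathbb{P}(\rho)}(1)) \rightarrow H^0(X, \mathcal{O}_X(1)),
\]
by showing it is an isomorphism and computing the dimension of its source directly. So the whole argument splits into a purely combinatorial weight computation for the right-hand identity and an exactness argument for the left-hand one.

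First I would establish the rightmost equality. Since \(X\) is well-formed, the ambient space \(\mathbb{P}(\rho)\) is well-formed (Definition~\ref{definition:WF-subscheme}), so \(\mathcal{R}(\mathbb{P}(\rho)) \simeq R^{\rho}\) and \(H^0(\mathbb{P}(\rho), \mathcal{O}_{\mathbb{P}(\rho)}(m))\) is the graded piece \(R^{\rho}_m\). For \(m = 1\) a weighted-homogeneous monomial \(X_0^{e_0} \cdots X_N^{e_N}\) of degree \(\sum_i e_i a_i = 1\) with all \(a_i \geqslant 1\) forces a single \(e_i = 1\) attached to a weight \(a_i = 1\), with all other exponents zero. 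Hence the variables \(\{X_i \mid a_i = 1\}\) form a basis of \(R^{\rho}_1\), and \(\dim(H^0(\mathbb{P}(\rho), \mathcal{O}_{\mathbb{P}(\rho)}(1))) = \vert \{i \mid a_i = 1\} \vert\).

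For the leftmost equality I would exhibit \(\res\) as an isomorphism in degree~\(1\). Surjectivity is immediate from projective normality (Lemma~\ref{lemma:WCI-QS-WF-projective-normality}). For injectivity, note that the kernel of \(\res\) is \(H^0(\mathbb{P}(\rho), \mathcal{I}_X(1))\), which for the saturated defining ideal \(\psi_Y(\mathcal{I}_X) = (f_1, \ldots, f_c)\) is its degree-\(1\) part \(\psi_Y(\mathcal{I}_X)_1\). Any homogeneous degree-\(1\) element of this ideal is a combination \(\sum_{j} g_j f_j\) with each \(g_j\) homogeneous of degree \(1 - d_j\); since \(X\) is non-degenerate, every \(d_j \geqslant 2\), so these degrees are negative and each \(g_j\) vanishes. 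Thus \(\psi_Y(\mathcal{I}_X)_1 = 0\), the kernel is trivial, and \(\res\) is an isomorphism in degree~\(1\).

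Combining the two steps yields \(\dim(H^0(X, \mathcal{O}_X(1))) = \dim(H^0(\mathbb{P}(\rho), \mathcal{O}_{\mathbb{P}(\rho)}(1))) = \vert \{i \mid a_i = 1\} \vert\). The only genuinely delicate point is the identification of \(\ker(\res)\) with \(\psi_Y(\mathcal{I}_X)_1\): it relies on the saturatedness of the defining ideal of a quasi-smooth well-formed weighted complete intersection, which together with projective normality produces the short exact sequence \(0 \to \psi_Y(\mathcal{I}_X)_1 \to R^{\rho}_1 \to H^0(X, \mathcal{O}_X(1)) \to 0\). Everything else is the elementary weight computation above and a direct application of the non-degeneracy hypothesis.
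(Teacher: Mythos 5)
The paper does not prove this lemma at all: it is quoted directly from the literature (Dolgachev, Theorem~3.4.4, and Iano-Fletcher, Lemma~7.1), so there is no internal proof to compare against. Your argument is correct and is essentially the standard one found in those references: the combinatorial count of weight-one variables gives the second equality, and the restriction map is an isomorphism in degree one because surjectivity follows from projective normality (Lemma~\ref{lemma:WCI-QS-WF-projective-normality}) while the degree-one piece of the defining ideal \((f_1,\ldots,f_c)\) vanishes since non-degeneracy forces every \(d_j \geqslant 2\). You correctly flag the one delicate point, namely that \(\ker(\res)\) in degree one is the degree-one part of the saturated defining ideal; this is where quasi-smoothness and well-formedness enter (via the saturatedness of \(\psi_Y(\mathcal{I}_X)\) and the identification of \(\mathcal{O}_X(1)\) with the sheaf associated to the graded module), and it is handled exactly this way in the cited sources.
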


\begin{remark}
  Not being an intersection with a linear cone is a rather mild restriction, provided that \(X \subseteq \mathbb{P}(\rho)\) is sufficiently general (see~\cite[Proposition~2.9]{przyjalkowski/codimension}).
\end{remark}

\begin{proposition}[{\cite[Remark~4.2]{okada/rationality}}, {\cite[Proposition~2.3]{pizzato/nonvanishing}}]\label{proposition:WCI-QS-WF-lefschetz}
  Let \(X \subset \mathbb{P}(\rho)\) be a quasi-smooth well-formed weighted complete intersection of dimension at least~3. Then \(\Cl(X)\) is generated by the class of the divisorial sheaf \(\mathcal{O}_X(1)\).
\end{proposition}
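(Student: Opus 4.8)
The plan is to pass to the affine cone and reduce the statement to a factoriality result of Grothendieck. Write \(A = \mathcal{R}_Y(X) = R^{\rho}/\psi_Y(\mathcal{I}_X)\) for the homogeneous coordinate ring, so that \(A = R^{\rho}/(f_1,\dots,f_c)\) with \((f_1,\dots,f_c)\) a regular sequence, and set \(C = \Spec(A)\) and \(\widehat{C} = C \setminus \{0\}\). The weighted grading induces a \(\mathbb{G}_m\)-action on \(C\) fixing only the vertex \(0\), and \(X \simeq \widehat{C}/\mathbb{G}_m\) is a good geometric quotient. Quasi-smoothness of \(X\) means precisely that \(\widehat{C}\) is smooth, i.e.\ that \(C\) has at worst an isolated singularity at the vertex.

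First I would record the exact sequence of divisor class groups attached to the \(\mathbb{G}_m\)-quotient \(\pi \colon \widehat{C} \to X\). Since \(A\) is a complete intersection it is Cohen--Macaulay, hence satisfies \(S_2\); as its singular locus is contained in \(\{0\}\), which has codimension \(\dim(C) = \dim(X)+1 \geqslant 4\), the ring \(A\) is also \(R_1\) and therefore normal. Pullback of Weil divisors along the \(\mathbb{G}_m\)-quotient then yields the standard exact sequence
\[
  \mathbb{Z} \xrightarrow{\ 1 \mapsto [\mathcal{O}_X(1)]\ } \Cl(X) \xrightarrow{\ \pi^{*}\ } \Cl(\widehat{C}) \longrightarrow 0,
\]
in which \(\pi^{*}\) is surjective and the image of \(\mathbb{Z}\) is generated by the class of the divisorial sheaf \(\mathcal{O}_X(1)\). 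Thus the proposition is equivalent to the vanishing \(\Cl(\widehat{C}) = 0\).

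Next I would compute \(\Cl(\widehat{C})\). As the vertex has codimension \(\geqslant 2\) in \(C\), removing it does not change the class group, so \(\Cl(\widehat{C}) = \Cl(C) = \Cl(A)\), and it suffices to show that \(A\) is a unique factorisation domain. This is exactly the situation covered by Grothendieck's factoriality theorem for complete intersections (SGA2, Expos\'{e}~XI): a complete intersection local ring that is factorial in codimension \(\leqslant 3\) is itself factorial. Here \(A\) has an isolated singularity, so the localisation \(A_{\mathfrak{p}}\) is regular --- in particular factorial --- for every non-maximal prime \(\mathfrak{p}\); and because \(\dim(A) = \dim(X) + 1 \geqslant 4\), every prime of height \(\leqslant 3\) is non-maximal. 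Hence the factoriality-in-low-codimension hypothesis holds, \(A\) is a UFD, \(\Cl(\widehat{C}) = 0\), and the exact sequence gives \(\Cl(X) = \langle [\mathcal{O}_X(1)] \rangle\).

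I expect the main obstacle to be the two geometric inputs feeding the two halves of the argument, rather than the formal deduction. On one side, one must justify the class-group exact sequence carefully: \(X\) need not be smooth (by Proposition~\ref{proposition:WCI-smooth-WF-criterion} a quasi-smooth well-formed \(X\) may still meet \(\Sing(\mathbb{P}(\rho))\) and carry quotient singularities), so the argument must work with Weil divisors and the \(\mathbb{G}_m\)-equivariant class group of \(\widehat{C}\) rather than with Picard groups; well-formedness is what keeps the relevant non-free locus in codimension \(\geqslant 2\) so that \(\Cl\) is unaffected. On the other side, the whole reduction hinges on the bound \(\dim(X) \geqslant 3\), which is precisely the threshold making \(\dim(A) \geqslant 4\) and hence putting Grothendieck's factoriality theorem in force; below it the conclusion genuinely fails, so this is where the dimension hypothesis is consumed. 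An alternative route, replacing the cone argument by an inductive hyperplane-section argument based at \(\Cl(\mathbb{P}(\rho)) = \mathbb{Z}\langle \mathcal{O}(1)\rangle\) together with a Grothendieck--Lefschetz theorem for class groups of normal projective varieties in the style of Ravindra--Srinivas, would meet the same obstacle in the form of controlling the singular loci of the intermediate weighted complete intersections.
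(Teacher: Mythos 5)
The paper offers no proof of this proposition at all --- it is imported verbatim from \cite[Remark~4.2]{okada/rationality} and \cite[Proposition~2.3]{pizzato/nonvanishing} --- and your argument is precisely the one used in those sources: pass to the punctured affine cone \(\widehat{C}\), which is smooth by quasi-smoothness, invoke the Demazure--Watanabe exact sequence \(\mathbb{Z} \to \Cl(X) \to \Cl(\widehat{C}) \to 0\) with the image of \(1\) being \([\mathcal{O}_X(1)]\), and kill \(\Cl(\widehat{C})\) by Grothendieck's factoriality theorem for complete intersection local rings that are factorial in codimension \(\leqslant 3\) (SGA2, Expos\'{e}~XI), which is exactly where \(\dim(X) \geqslant 3\), i.e.\ \(\dim(A) \geqslant 4\), is consumed. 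Your proof is correct; the only step worth spelling out is that Grothendieck's theorem is a statement about the local ring at the vertex, from which factoriality of the graded ring \(A\) follows because a graded normal domain is a UFD if and only if its localisation at the irrelevant maximal ideal is.
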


\begin{proposition}[{\cite[Theorem~3.3.4]{dolgachev/weighted}}, {\cite[\nopp 6.14]{ianofletcher/weighted}}, {\cite[Corollary~2.6]{sano/hypersurfaces}}]\label{proposition:WCI-QS-WF-adjunction}
  Let \(X \subset \mathbb{P}(a_0, \ldots, a_N)\) be a quasi-smooth well-formed weighted complete intersection of dimension at least 2 and multidegree \((d_1, \ldots, d_c)\), and \(\omega_X\) be the dualising sheaf. Then the following identity holds: \(\omega_X \simeq \mathcal{O}_X(\sum_{j = 1}^c d_j - \sum_{i = 0}^N a_i)\).
\end{proposition}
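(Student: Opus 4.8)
The plan is to reduce the statement to the two classical ingredients---the dualising sheaf of the ambient weighted projective space and the ordinary adjunction formula on a smooth locus---and then to propagate the resulting isomorphism across the (codimension \(\geq 2\)) locus where \(X\) could meet \(\Sing(\mathbb{P}(\rho))\) by a reflexivity argument.

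First I would compute \(\omega_{\mathbb{P}(\rho)}\). Presenting \(\mathbb{P}(\rho) = \Proj(R^{\rho})\) as the quotient \((\mathbb{A}^{N+1} \setminus \{0\})/\mathbb{G}_m\), with \(\mathbb{G}_m\) acting with weights \(a_0, \ldots, a_N\), a weighted Euler sequence on the smooth locus \(U = \mathbb{P}(\rho) \setminus \Sing(\mathbb{P}(\rho))\) identifies the canonical sheaf with \(\mathcal{O}_{\mathbb{P}(\rho)}(-\sum_{i=0}^N a_i)\vert_U\); equivalently, the graded canonical module of the polynomial ring \(R^{\rho}\) is \(R^{\rho}(-\sum_{i=0}^N a_i)\), since its \(a\)-invariant equals \(-\sum a_i\). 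As \(\mathbb{P}(\rho)\) is normal, Cohen--Macaulay, and (being well-formed) singular only in codimension \(\geq 2\), the reflexive sheaf \(\omega_{\mathbb{P}(\rho)}\) is recovered from its restriction to \(U\), giving \(\omega_{\mathbb{P}(\rho)} \simeq \mathcal{O}_{\mathbb{P}(\rho)}(-\sum_{i=0}^N a_i)\).

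Next, set \(X^{\circ} = X \setminus (X \cap \Sing(\mathbb{P}(\rho)))\). By well-formedness of \(X\) (see Definition~\ref{definition:WF-subscheme}) we have \(\codim_X(X \cap \Sing(\mathbb{P}(\rho))) \geq 2\), and \(X^{\circ}\) lies in the manifold \(U\), where quasi-smoothness makes \(X^{\circ}\) a genuine smooth complete intersection cut out by \(f_1, \ldots, f_c\) of degrees \(d_1, \ldots, d_c\). The conormal sequence then yields \(\det N_{X^{\circ}/U} \simeq \mathcal{O}_{X^{\circ}}(\sum_{j=1}^c d_j)\), and ordinary adjunction gives
\[
  \omega_{X^{\circ}} \simeq \left(\omega_{\mathbb{P}(\rho)}\vert_{X^{\circ}}\right) \otimes \det N_{X^{\circ}/U} \simeq \mathcal{O}_{X^{\circ}}\left(\sum_{j=1}^c d_j - \sum_{i=0}^N a_i\right).
\]

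Finally I would extend this isomorphism over all of \(X\). Both \(\omega_X\) (the dualising sheaf of the Cohen--Macaulay normal variety \(X\)) and \(\mathcal{O}_X(\sum_{j} d_j - \sum_i a_i)\) are reflexive of rank one, hence each is the pushforward of its restriction to the open set \(X^{\circ}\) whose complement has codimension \(\geq 2\); the isomorphism over \(X^{\circ}\) therefore extends uniquely. The main obstacle is precisely this singular bookkeeping: one must verify that \(X\) is normal and Cohen--Macaulay so that \(\omega_X\) is reflexive and restricts to \(\omega_{X^{\circ}}\), and that \(\mathcal{O}_X(k)\) is the reflexive hull of its restriction---both resting on quasi-smoothness (an isolated singularity of the affine cone) together with well-formedness (the codimension \(\geq 2\) estimate). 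An alternative, purely algebraic route sidesteps the sheaf extension: compute the graded canonical module \(\omega_S\) of \(S = \mathcal{R}_Y(X) = R^{\rho}/(f_1, \ldots, f_c)\) from the Koszul resolution via \(\omega_S \simeq \operatorname{Ext}^c_{R^{\rho}}(S, \omega_{R^{\rho}}) \simeq S(\sum_j d_j - \sum_i a_i)\), and then sheafify, the identification of \(\widetilde{\omega_S}\) with \(\omega_X\) again being the delicate point.
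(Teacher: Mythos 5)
Your argument is correct, and the paper itself offers no proof of this proposition: it is quoted as a known result from Dolgachev, Iano--Fletcher, and Sano, whose arguments are essentially what you reconstruct (the graded canonical module \(\operatorname{Ext}^c_{R^{\rho}}(S,\omega_{R^{\rho}})\simeq S(\sum_j d_j-\sum_i a_i)\) via the Koszul resolution, together with the identification of its sheafification with \(\omega_X\)). You also correctly isolate the only delicate point --- that quasi-smoothness and well-formedness are exactly what make \(X\) normal and Cohen--Macaulay with \(\operatorname{codim}_X(X\cap\Sing(\mathbb{P}(\rho)))\geqslant 2\), so that both \(\omega_X\) and \(\mathcal{O}_X(k)\) are reflexive and the isomorphism extends from \(X^{\circ}\) --- which is precisely the content supplied by the cited references.
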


\section{Proof of Theorems~\ref{theorem:CI-grassmannians} and~\ref{theorem:WCI}}\label{section:proof}

In this section we prove Theorems~\ref{theorem:CI-grassmannians} and~\ref{theorem:WCI}. To this end, we introduce the following class of varieties which behave like smooth complete intersections in \(\mathbb{P}^n\).

\begin{definition}\label{definition:CI-admissible}
  Let \(Y\) be a complex Mori dream space with \(\Cl(Y) \simeq \mathbb{Z}\) which can be defined over~\(\mathbb{Q}\) (see Subsection~\ref{subsection:CI-generalities}). A smooth complete intersection \(X \subseteq Y\) of multidegree \((d_1, \ldots, d_c)\) is \emph{admissible} if
\begin{itemize}
\item \(\Pic(X) \simeq \mathbb{Z}\), and is generated by the restriction \(\left.\mathcal{H}\right|_X\) of the ample generator \(\mathcal{H} \in \Cl(Y)\), provided that \(\dim(X) > 2\) (``Lefschetz theorem'');
  \item the adjunction formula \(\omega_X \simeq \left.\omega_Y\right|_X \otimes \left.\mathcal{H}\right|_X^{\otimes \sum d_j}\) holds if \(\dim(X) > 1\);
  \item the restriction map \(H^0(Y, \mathcal{H}^{\otimes k}) \rightarrow H^0(X, \left.\mathcal{H}\right|_X^{\otimes k})\) is surjective for any \(k \in \mathbb{Z}_{\geqslant 0}\), and for any divisor \(D \in H^0(X, \left.\mathcal{H}\right|_X^{\otimes k})\) there should exist \(\widetilde{X} \in H^0(Y, \mathcal{H}^{\otimes k})\) such that \(X = \widetilde{X} \cap Y\) is an admissible complete intersection in \(Y\);
  \item for any admissible complete intersection \(X \subset Y\) and any \(\sigma \in \Aut(\mathbb{C} / \mathbb{Q})\) its conjugate \(X^{\sigma}\) is also an admissible complete intersection (cf. Lemma~\ref{lemma:CI-MDS-one-conjugation});
  \item any smooth divisor \(D \in H^0(X, \left.\mathcal{H}\right|_X^{\otimes k})\) of general type in a very ample linear system satisfies the \emph{infinitesimal Torelli theorem}, i.e., the canonical map
    \[
      H^1(X, \mathcal{T}_X) \rightarrow \bigoplus_{p + q = \dim(X)} \Hom(H^p(X, \Omega_X^q), H^{p + 1}(X, \Omega_X^{q - 1}))
    \]
    is injective, where \(\mathcal{T}_X\) is the tangent sheaf of \(X\).
  \end{itemize}
\end{definition}

\begin{remark}
  There exist smooth varieties with very ample canonical class such that infinitesimal Torelli theorem fails for them (see~\cite{garra/torelli}).
\end{remark}

\begin{proposition}\label{proposition:CI-admissible}
  The following smooth complete intersections are admissible:
  \begin{itemize}
  \item complete intersections in generalised Grassmannians;
  \item well-formed weighted complete intersections with \(\dim(\vert \mathcal{O}_X(1) \vert) \geqslant 2\).
  \end{itemize}  
\end{proposition}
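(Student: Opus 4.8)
The plan is to verify, for each of the two classes separately, the five defining conditions of admissibility in Definition~\ref{definition:CI-admissible}, invoking for each condition either a result already recorded in Section~\ref{section:preliminaries} or a standard theorem from the literature. I would begin by observing that in both cases the ambient space \(Y\) satisfies the standing hypotheses of Definition~\ref{definition:CI-admissible}: a generalised Grassmannian \(G/P\) is a smooth Fano variety with \(\Pic(Y) = \Cl(Y) \simeq \mathbb{Z}\) arising from a split Chevalley group, hence a Mori dream space definable over \(\mathbb{Q}\); and a well-formed \(\mathbb{P}(\rho)\) has \(\Cl(\mathbb{P}(\rho)) \simeq \mathbb{Z}\) with \(\mathcal{R}(\mathbb{P}(\rho)) \simeq R^{\rho}\), manifestly definable over \(\mathbb{Q}\). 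This places us in the setting where the Cox-ring formalism of Subsection~\ref{subsection:CI-generalities}, and in particular Lemma~\ref{lemma:CI-MDS-one-conjugation}, applies.

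For the Lefschetz and adjunction conditions I would argue as follows. In the Grassmannian case the equality \(\Pic(X) \simeq \mathbb{Z}\cdot(\mathcal{H}|_X)\) for \(\dim(X) > 2\) follows from the Grothendieck--Lefschetz hyperplane theorem for a complete intersection of ample divisors in a smooth projective variety with \(\Pic(Y) \simeq \mathbb{Z}\), while \(\omega_X \simeq \omega_Y|_X \otimes \mathcal{H}|_X^{\otimes \sum d_j}\) is the standard adjunction formula for a complete intersection. In the weighted case these two conditions are precisely Proposition~\ref{proposition:WCI-QS-WF-lefschetz} (together with \(\Pic(X) = \Cl(X)\) for smooth \(X\)) and Proposition~\ref{proposition:WCI-QS-WF-adjunction}, upon identifying \(\omega_Y|_X = \mathcal{O}_X(-\sum a_i)\).

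For the surjectivity-and-heredity condition I would use projective normality. In the Grassmannian case the surjectivity of \(H^0(Y, \mathcal{H}^{\otimes k}) \to H^0(X, \mathcal{H}|_X^{\otimes k})\) follows from the vanishing \(H^1(Y, \mathcal{I}_X \otimes \mathcal{H}^{\otimes k}) = 0\), obtained by applying standard Kodaira-type vanishing on the Fano variety \(Y\) to the Koszul resolution of \(\mathcal{I}_X\); in the weighted case it is exactly Lemma~\ref{lemma:WCI-QS-WF-projective-normality}. The heredity clause then follows: once surjectivity is granted, a general member \(D \in |\mathcal{H}|_X^{\otimes k}|\) lifts by Bertini to a smooth hypersurface \(\widetilde{D} \subset Y\) cutting out \(D\) on \(X\), so that \(D\) is a complete intersection in \(Y\) of multidegree \((d_1, \ldots, d_c, k)\) of the same type, and the entire verification is inherited. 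The conjugation condition is a direct application of Lemma~\ref{lemma:CI-MDS-one-conjugation}, whose proof already shows that \(\sigma \in \Aut(\mathbb{C}/\mathbb{Q})\) carries a strict complete intersection of a given multidegree to another such complete intersection in \(Y\).

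The main obstacle, and the only genuinely deep input, is the infinitesimal Torelli condition, which must be checked for the divisor \(D \subset X\) rather than for \(X\) itself; since \(D\) is again a complete intersection in \(Y\), the relevant statements are available externally. For generalised Grassmannians this is Konno's infinitesimal Torelli theorem~\cite{konno/torelli}, and for weighted complete intersections it is Usui's theorem~\cite{usui/torelli}; the latter is exactly where the hypothesis \(\dim(|\mathcal{O}_X(1)|) \geqslant 2\) enters, which is why it must be assumed in Theorem~\ref{theorem:WCI}. The delicate point is to confirm that the hypotheses of these theorems—being of general type, together with the codimension and degree bounds under which they are proved—are satisfied by \(D\). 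I would handle this by combining the adjunction formula with the assumption that \(D\) is a smooth divisor of general type in a very ample linear system, which places \(D\) in the range covered by Konno's and Usui's results and completes the verification for both classes.
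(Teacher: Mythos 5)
Your overall route is the same as the paper's: check the five conditions of Definition~\ref{definition:CI-admissible} one by one, using the Lefschetz, adjunction and projective-normality statements from Section~\ref{section:preliminaries}, Lemma~\ref{lemma:CI-MDS-one-conjugation} for conjugation, and Konno resp.\ Usui for the infinitesimal Torelli condition. Most of the verification matches the paper's proof. But there is one genuine gap, and it sits exactly at the point you yourself flag as ``the delicate point'' and then dispatch with a non-argument. Usui's theorem applies to a weighted complete intersection \(D\) under the hypothesis \(\dim(\vert \mathcal{O}_D(1) \vert) \geqslant 2\); your proposed justification --- ``combining the adjunction formula with the assumption that \(D\) is a smooth divisor of general type in a very ample linear system'' --- does not touch this hypothesis at all, since neither adjunction nor being of general type says anything about the number of sections of \(\mathcal{O}_D(1)\). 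What you must show is that the assumption \(\dim(\vert \mathcal{O}_X(1) \vert) \geqslant 2\) on \(X\) transfers to the divisor \(D\), and this is where the paper does real work: \(\vert \mathcal{O}_X(1) \vert\) is \emph{very} ample only when \(\mathbb{P}(\rho) = \mathbb{P}^n\), so for \(\mathbb{P}(\rho) \neq \mathbb{P}^n\) a smooth divisor \(D\) in a very ample linear system necessarily lies in \(\vert \mathcal{O}_X(k) \vert\) with \(k \geqslant 2\); then \(D\) is a non-degenerate weighted complete intersection and Lemma~\ref{lemma:WCI-QS-WF-O1-dimension} gives \(\dim(\vert \mathcal{O}_D(1) \vert) = \vert\{i \mid a_i = 1\}\vert = \dim(\vert \mathcal{O}_X(1) \vert) \geqslant 2\). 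Without this step the application of Usui's theorem to the fibres of the Lefschetz pencil is unjustified.

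Two smaller points. First, your heredity argument via Bertini produces a smooth lift only for a \emph{general} member of \(\vert \mathcal{H}\vert_X^{\otimes k} \vert\), whereas what is actually used later is that every \emph{smooth} member is again an admissible complete intersection; in the weighted case the paper gets this from Proposition~\ref{proposition:WCI-smooth-WF-criterion} (smooth and well-formed is equivalent to quasi-smooth and disjoint from \(\Sing(\mathbb{P}(\rho))\)), which guarantees that a smooth divisor on a smooth well-formed weighted complete intersection is itself a smooth well-formed weighted complete intersection --- Bertini is not the right tool here because lifts of a given smooth \(D\) need not be smooth on \(Y\), and smoothness of the lift is not what is needed anyway. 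Second, in the conjugation condition for the weighted case you should also record that \(X^{\sigma}\) remains smooth and well-formed (again via the quasi-smoothness criterion), not merely that it is a strict complete intersection of the same multidegree.
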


\begin{proof}
  A generalised Grassmannian \(Y\) is a quotient of a simple algebraic group~\(G\) by a maximal parabolic subgroup \(P\). It is a smooth Fano variety of Picard rank one (see~\cite[Section~2]{munoz/campana}). Moreover, any parabolic subgroup \(P \subset Y\) is conjugated to a standard parabolic subgroup (for example, see~\cite[Proposition~12.2]{malle/groups}), hence the variety \(Y\) can be defined over \(\mathbb{Q}\). Note that Lefschetz hyperplane section theorem clearly holds for \(X \subseteq Y\). Moreover, we have the adjunction formula for any smooth complete intersection \(X \subseteq Y\). Actually, its normal bundle \((\mathcal{I}_X / \mathcal{I}_X^2)^{\vee}\) is always isomorphic to \(\oplus_{j = 1}^c \mathcal{O}_X(d_j)\), where \((d_1, \ldots, d_c)\) is the multidegree of \(X\), because the ideal sheaf \(\mathcal{I}_X\) can be resolved by the Koszul complex. Moreover, the restriction map \(H^0(Y, \mathcal{O}_Y(k)) \rightarrow H^0(X, \mathcal{O}_X(k))\) is surjective (see the proof of~\cite[Lemma~2.2]{konno/torelli}). At last, smooth complete intersections of general type in generalised Grassmannians satisfy the infinitesimal Torelli theorem by~\cite[Theorem~2.6]{konno/torelli}.

  Weighted projective spaces \(\mathbb{P}(\rho)\) are Mori dream spaces with \(\Cl(\mathbb{P}(\rho)) \simeq \mathbb{Z}\), and can be defined over \(\mathbb{Q}\). Lefschetz hyperplane section theorem and the adjunction formula hold for any smooth well-formed weighted complete intersection \(X \subset \mathbb{P}(\rho)\) (see Propositions~\ref{proposition:WCI-QS-WF-lefschetz} and~\ref{proposition:WCI-QS-WF-adjunction}). The restriction map \(H^0(\mathbb{P}(\rho), \mathcal{O}_{\mathbb{P}(\rho)}(k)) \rightarrow H^0(X, \mathcal{O}_X(k))\) is surjective (see Lemma~\ref{lemma:WCI-QS-WF-projective-normality}). Moreover, Proposition~\ref{proposition:WCI-smooth-WF-criterion} states that a weighted complete intersection is smooth and well-formed if and only if it is quasi-smooth and does not intersect the singular locus of \(\mathbb{P}(\rho)\). Consequently, a smooth divisor on a smooth well-formed weighted complete intersection \(X\) is again a smooth well-formed weighted complete intersection, and any conjugate \(X^{\sigma}\) is also smooth and well-formed for any \(\sigma \in \Aut(\mathbb{C} / \mathbb{Q})\) (cf. Lemma~\ref{lemma:CI-MDS-one-conjugation}). At last, any smooth well-formed weighted complete intersection of general type \(W \subseteq \mathbb{P}(\rho)\) such that \(\dim(\vert \mathcal{O}_W(1) \vert) \geqslant 2\) satisfies the infinitesimal Torelli theorem by Lemma~\ref{lemma:WCI-QS-WF-O1-dimension} and~\cite[Theorem~2.1]{usui/torelli}.

  Note that the linear system \(\vert \mathcal{O}_X(1) \vert\) is \emph{very} ample if and only if \(\mathbb{P}(\rho) = \mathbb{P}^n\). Consequently, if \(\mathbb{P}(\rho) \neq \mathbb{P}^n\), then a smooth divisor \(D \subset X\) lying in a very ample linear system satisfies \(D \not \in \vert \mathcal{O}_X(1) \vert\), hence \(\dim(\vert \mathcal{O}_X(1) \vert) = \dim(\vert \mathcal{O}_D(1) \vert)\) (see Lemma~\ref{lemma:WCI-QS-WF-O1-dimension}). In other words, we are able to apply Usui's infinitesimal Torelli theorem.
\end{proof}

\begin{question}\label{question:torelli}
  The only reason we assume \(\dim(\vert \mathcal{O}_X(1) \vert) \geqslant 2\) is because it is crucial for Usui's proof of the infinitesimal Torelli theorem (see~\cite{usui/torelli}, and also~\cite{licht/torelli}). Let us briefly recall its strategy. Let \(X \subset \mathbb{P}(a_0, \ldots, a_N)\) be a smooth well-formed weighted complete intersection of general type defined by a regular sequence \((f_1, \ldots, f_c)\) of weighted homogeneous polynomials of degree \(d_j\). We consider the \emph{Jacobi ring} of \(X\):
  \[
    R = \mathbb{C}[x_0, \ldots, x_N; y_1, \ldots, y_c]/(\partial_{x_0}(F), \ldots, \partial_{x_N}(F),
    \partial_{y_1}(F), \ldots, \partial_{y_c}(F)), \; F = \sum_{j = 1}^c y_j f_j.
  \]
  We denote by \(R_{(i, j)}\) its homogeneous components under the bi-grading \(\deg(x_i) = (0, 1)\), \(\deg(y_j) = (1, -d_j)\). Then the infinitesimal Torelli map
  \[
    \Psi \colon H^1(X, \mathcal{T}_X) \rightarrow \bigoplus_{p + q = \dim(X)} \Hom(H^p(X, \Omega_X^q), H^{p + 1}(X, \Omega_X^{q - 1}))
  \]
  can be interpreted as follows:
  \begin{gather*}
    H^1(X, \mathcal{T}_X) \simeq R_{(1, 0)}, \quad
    H^p(X, \Omega_X^q) \simeq \Hom(R_{(q, -S)}, \mathbb{C}), \quad
    S = \sum_{i = 0}^N a_i - \sum_{j = 1}^c d_j, \\
    \Psi_q \colon R_{(1, 0)} \rightarrow \Hom(\Hom(R_{(q, -S)}, \mathbb{C}), \Hom(R_{(q - 1, -S)}, \mathbb{C})) = \Hom(R_{(q - 1, -S)}, R_{(q, -S)}).
  \end{gather*}
  In other words, \(\Psi\) sends any \(r \in R_{1, 0}\) to the multiplication-by-\(r\) map, and the infinitesimal Torelli theorem holds precisely when \(\Psi\) is non-degenerated. Usui proved this under the assumption \(\dim(\vert \mathcal{O}_X(1) \vert) \geqslant 2\). Can the assumption be dropped?
\end{question}

\begin{definition}
  Let \(X\) be a smooth projective variety.

  A \emph{Lefschetz fibration} \(f \colon Y = \Bl_{\Bs(\mathcal{L})}(X) \rightarrow \mathbb{P}^{1}\) over \(X\) is a resolution of indeterminacy of the rational map \(\Phi_{\mathcal{L}} \colon X \dashrightarrow \mathbb{P}^1\) defined by a Lefschetz pencil~\(\mathcal{L}\) by a single blow up. A morphism \(f\) is proper and flat (see~\cite[Expos\'{e}~XVII]{deligne/sga}). A Lefschetz fibration \(f\) is \emph{rigid} if it is rigid as a flat family over~\(\mathbb{P}^1\). Equivalently, the corestriction \(\left.f\right|_{\mathbb{P}^{1}(\mathbb{C}) \setminus \Crit(f)}\) is a rigid smooth family (see~\cite[Proposition~III.9.8]{hartshorne/algebraic}).
\end{definition}

\begin{lemma}\label{lemma:ridig-lefschetz-fibrations-bounded}
  Let \(\varnothing \neq B \subseteq \mathbb{P}^1(\mathbb{C})\) be a Zariski-open subset, and \(h \in \mathbb{Z}[T]\) be an integer polynomial. Denote by \(\mathcal{X}_{B, h}\) the set of isomorphism classes over \(\mathbb{P}^1\) of all Lefschetz fibrations \(f \colon Y = \Bl_{\Bs(\mathcal{L})}(X) \rightarrow \mathbb{P}^1\) over all smooth projective varieties~\(X\) such that the corestriction \(\left.f\right|_{\mathbb{P}^{1}(\mathbb{C}) \setminus \Crit(f)}\) is a rigid smooth family of canonically polarised varieties with the Hilbert polynomial~\(h\), and the morphism \(f\) is smooth over~\(B\), i.e., we have \(B \subseteq \mathbb{P}^1(\mathbb{C}) \setminus \Crit(f)\). Then the set \(\mathcal{X}_{B, h}\) is finite.
\end{lemma}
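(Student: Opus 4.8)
The plan is to reduce the statement to a Shafarevich-type finiteness theorem over a fixed base and then feed it into the boundedness and rigidity machinery for families of canonically polarised varieties. First I would note that, since $\varnothing \neq B$ is Zariski-open in $\mathbb{P}^1(\mathbb{C})$, its complement $T = \mathbb{P}^1(\mathbb{C}) \setminus B$ is a \emph{finite} set, and for every $f \in \mathcal{X}_{B, h}$ we have $\Crit(f) \subseteq T$. As $T$ is finite, there are only finitely many possible critical loci $S = \Crit(f) \subseteq T$, so it suffices to bound, for each fixed finite $S \subset \mathbb{P}^1(\mathbb{C})$, the number of Lefschetz fibrations $f$ whose smooth corestriction lives over the fixed curve $U_S = \mathbb{P}^1(\mathbb{C}) \setminus S$. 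I would further observe that the assignment sending $f$ to its smooth corestriction $\left.f\right|_{U_S}$ has finite fibres: a smooth proper family over $U_S$ extends to a Lefschetz fibration over $\mathbb{P}^1$ in at most finitely many ways, since the total space is recovered birationally from the generic fibre and the admissible single ordinary double point degenerations over $S$ fall into finitely many combinatorial types. Thus everything reduces to showing that, for a fixed curve $U_S$, the set of isomorphism classes of \emph{rigid} smooth families of canonically polarised varieties with Hilbert polynomial $h$ over $U_S$ is finite.

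For this I would invoke the moduli theory of canonically polarised manifolds. By Viehweg's theorem there is a quasi-projective coarse moduli space $M_h$ parametrising canonically polarised manifolds with Hilbert polynomial $h$, and any smooth family over $U_S$ induces a moduli map $U_S \to M_h$. The crucial input is \emph{boundedness}: the boundedness half of the Shafarevich conjecture for higher-dimensional canonically polarised varieties over function fields (Bedulev--Viehweg in the surface case, Kov\'{a}cs--Lieblich in general) shows that all such families over the fixed base $U_S$ form a bounded family, hence are parametrised by a scheme $\mathcal{H}$ of finite type over $\mathbb{C}$ (after a suitable rigidification, using that canonically polarised varieties have finite automorphism groups, so that the relevant moduli stack is separated and Deligne--Mumford). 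Isotrivial families require no extra work here: with fibre a fixed canonically polarised $F$, they are classified by a subset of $H^1(\pi_1(U_S), \Aut(F))$, which is finite because $\Aut(F)$ is finite and $\pi_1(U_S)$ is finitely generated.

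It then remains to extract finiteness from rigidity. By hypothesis each family is \emph{rigid}, i.e.\ it admits no nontrivial infinitesimal deformations as a smooth family; translating this into deformations of the classifying data, the corresponding point of $\mathcal{H}$ has vanishing tangent space and hence is a reduced isolated point, that is, a zero-dimensional irreducible component of $\mathcal{H}$. Since $\mathcal{H}$ is of finite type over $\mathbb{C}$, it is Noetherian and therefore has only finitely many irreducible components; in particular it has only finitely many isolated points. Combining this with the finite-fibre reductions above, taken over the finitely many choices of $S \subseteq T$, yields the finiteness of $\mathcal{X}_{B, h}$.

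I expect the main obstacle to be the boundedness step: guaranteeing that the families in question genuinely form a finite-type parameter space $\mathcal{H}$ is the difficult geometric content, resting on Viehweg's quasi-projectivity of moduli together with the boundedness part of the Shafarevich conjecture for canonically polarised varieties. By comparison, the passage from the geometric notion of rigidity to ``isolated reduced point of $\mathcal{H}$'' and the finite-fibre recovery of the Lefschetz fibration from its smooth corestriction are essentially formal, though they require care in identifying deformations of the family with deformations of the moduli map and in controlling the ordinary double point degenerations over the fixed finite set $T$.
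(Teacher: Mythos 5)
Your argument rests on exactly the same key input as the paper's proof, namely the Kov\'{a}cs--Lieblich boundedness theorem for rigid smooth families of canonically polarised manifolds with fixed Hilbert polynomial over a fixed base curve: the paper's entire proof is the citation ``Follows from the result of Kov\'{a}cs--Lieblich''. Your extra reductions (only finitely many possible critical loci \(S \subseteq \mathbb{P}^1(\mathbb{C}) \setminus B\), finite-to-one recovery of the Lefschetz fibration from its smooth corestriction, and the boundedness-plus-rigidity-gives-isolated-points mechanism) simply make explicit what the paper leaves implicit, so the proposal is correct and follows essentially the same route.
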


\begin{proof}
  Follows from the result of Kov\'{a}cs--Lieblich (see~\cite[Theorem~2.2]{kovacs/boundedness}).
\end{proof}

\begin{corollary}\label{corollary:lefschetz-fibrations-finiteness}
  Let \(\varnothing \neq B \subseteq \mathbb{P}^1(\mathbb{C})\) be a Zariski-open subset, and \(h \in \mathbb{Z}[T]\) be an integer polynomial. Denote by \(\mathcal{Y}_{B, h, n}\) the set of isomorphism classes of smooth projective varieties of general type \(X \subseteq \mathbb{P}^{n}\) admitting a Lefschetz fibration \(f \colon Y \rightarrow \mathbb{P}^{1}\) over \(X\) such that the corestriction \(\left.f\right|_{\mathbb{P}^{1}(\mathbb{C}) \setminus \Crit(f)}\) is a rigid smooth family of canonically polarised varieties with the Hilbert polynomial \(h\), and the morphism \(f\) is smooth over~\(B\), i.e., we have \(B \subseteq \mathbb{P}^1(\mathbb{C}) \setminus \Crit(f)\). Then the set \(\mathcal{Y}_{B, h, n}\) is finite.
\end{corollary}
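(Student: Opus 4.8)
The plan is to deduce the finiteness of \(\mathcal{Y}_{B, h, n}\) from the finiteness of \(\mathcal{X}_{B, h}\) established in Lemma~\ref{lemma:ridig-lefschetz-fibrations-bounded} by realising \(\mathcal{Y}_{B, h, n}\) as the image, under a forgetful map, of a subset of \(\mathcal{X}_{B, h}\). Let \(\mathcal{X}'_{B, h} \subseteq \mathcal{X}_{B, h}\) consist of those classes of Lefschetz fibrations \(f \colon Y = \Bl_{\Bs(\mathcal{L})}(X) \to \mathbb{P}^1\) whose underlying variety \(X\) is of general type and embeds into \(\mathbb{P}^n\). I would then define
\[
  \Phi \colon \mathcal{X}'_{B, h} \longrightarrow \mathcal{Y}_{B, h, n}, \qquad
  [f \colon Y \to \mathbb{P}^1] \longmapsto [X],
\]
sending a fibration to the variety \(X\) underlying it, i.e. the target of the blow-down \(\pi \colon Y \to X\). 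By the very definition of \(\mathcal{Y}_{B, h, n}\), every class \([X] \in \mathcal{Y}_{B, h, n}\) admits such a fibration, so \(\Phi\) is surjective as soon as it is well-defined; since \(\mathcal{X}'_{B, h}\) is finite, its image \(\mathcal{Y}_{B, h, n}\) is then finite as well.

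The crux is the well-definedness of \(\Phi\), that is, the claim that the isomorphism class of \(X\) depends only on the class over \(\mathbb{P}^1\) of \(f\). Here I would use that \(X\) is recovered from \(Y\) by a \emph{canonical} contraction. The centre \(Z = \Bs(\mathcal{L})\) is smooth of codimension two, so the exceptional divisor \(E = \pi^{-1}(Z)\) is a \(\mathbb{P}^1\)-bundle over \(Z\) whose fibres \(\ell\) satisfy \(K_Y \cdot \ell = -1\); since \(X\) is of general type, these fibres are the only \(K_Y\)-negative curves on \(Y\), and \(\pi\) is intrinsically the contraction of the corresponding extremal ray. In the situation of interest \(K_X\) is even ample, so \(X = \Proj \bigoplus_{m \geqslant 0} H^0(Y, \omega_Y^{\otimes m})\) is simply the canonical model of \(Y\). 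Either way, any isomorphism \(g \colon Y_1 \xrightarrow{\sim} Y_2\) over \(\mathbb{P}^1\) necessarily carries \(E_1\) to \(E_2\) and therefore descends to an isomorphism \(X_1 \xrightarrow{\sim} X_2\), which yields the well-definedness of \(\Phi\).

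I expect this well-definedness step to be the main obstacle: one must ensure that the blow-down \(Y \to X\) is canonically attached to \(Y\) rather than chosen, so that isomorphic fibrations cannot produce non-isomorphic base varieties. The remaining constraints, namely the ambient dimension \(n\) and the fibre Hilbert polynomial \(h\), only serve to single out which fibrations belong to \(\mathcal{X}_{B, h}\) and to guarantee \(X \subseteq \mathbb{P}^n\); they present no difficulty and do not obstruct surjectivity onto \(\mathcal{Y}_{B, h, n}\). Once the canonicity of the contraction recovering \(X\) is secured, the finiteness of \(\mathcal{Y}_{B, h, n}\) follows formally from that of \(\mathcal{X}_{B, h}\).
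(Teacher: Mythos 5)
Your reduction of the finiteness of \(\mathcal{Y}_{B,h,n}\) to that of \(\mathcal{X}_{B,h}\) from Lemma~\ref{lemma:ridig-lefschetz-fibrations-bounded} is the right starting point, and you correctly isolate the crux: one must pass from finitely many fibrations \(f \colon Y \to \mathbb{P}^1\) to finitely many base varieties \(X\). But your justification of that passage has a gap. The claim that the fibres \(\ell\) of \(E \to \Bs(\mathcal{L})\) are ``the only \(K_Y\)-negative curves on \(Y\)'' because \(X\) is of general type is unjustified and false in this generality: general type only means \(K_X\) is big, so already for a non-minimal surface of general type there are \(K_X\)-negative curves whose strict transforms are \(K_Y\)-negative, and even when \(K_X\) is nef there can be \(K_Y\)-negative multisections of \(E \to \Bs(\mathcal{L})\) inside \(E\), since \(K_Y \cdot C = K_X \cdot \pi_*C + E \cdot C\). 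Your fallback --- that \(X = \Proj \bigoplus_{m \geqslant 0} H^0(Y, \omega_Y^{\otimes m})\) is the canonical model of \(Y\) --- is correct precisely when \(K_X\) is ample; that does hold in the paper's intended application (an admissible complete intersection of general type has \(\Pic(X) \simeq \mathbb{Z}\) with \(\omega_X\) a positive power of the ample generator), but it is not a hypothesis of the corollary, which assumes only that \(X \subseteq \mathbb{P}^n\) is smooth projective of general type. As written, your argument therefore proves a weaker statement than the one asserted, and I do not see how to pin down the contraction \(Y \to X\) intrinsically (even using the fibration structure over \(\mathbb{P}^1\)) without some such positivity assumption.

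The paper sidesteps the recoverability question entirely: it combines Lemma~\ref{lemma:ridig-lefschetz-fibrations-bounded} with a Tsai-type theorem for bounded families (see~\cite[Theorem~2.2]{javanpeykar/belyi}). The point is that a fixed projective variety dominates only finitely many isomorphism classes of varieties of general type; since every \(X\) occurring in \(\mathcal{Y}_{B,h,n}\) receives a birational (in particular dominant) morphism from one of the finitely many total spaces \(Y\) supplied by the lemma, finiteness of \(\mathcal{Y}_{B,h,n}\) follows with no need to show that \(X\) is canonically attached to \(Y\). If you wish to keep your more elementary route, you should either add the hypothesis that \(K_X\) is ample (which suffices for the application in Theorem~\ref{theorem:CI-admissible}) or replace the extremal-ray/canonical-model step by an appeal to Tsai's finiteness theorem for a single \(Y\).
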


\begin{proof}
  Follows from Lemma~\ref{lemma:ridig-lefschetz-fibrations-bounded} and a Tsai-type theorem for bounded families (see~\cite[Theorem~2.2]{javanpeykar/belyi}).
\end{proof}

\begin{lemma}\label{lemma:lefschetz-fibration-rigidity}
  Let \(X\) be a smooth projective variety, and \(f \colon Y \rightarrow \mathbb{P}^1\) be a Lefschetz fibration over~\(X\). Assume that fibres of the corestriction \(\left.f\right|_B\) to \(B = \mathbb{P}^1(\mathbb{C}) \setminus \Crit(f)\) satisfy the infinitesimal Torelli theorem.

  Then \(\left.f\right|_B\) is a rigid smooth family (i.e., \(f \colon Y \rightarrow \mathbb{P}^1\) is a rigid Lefschetz fibration).
\end{lemma}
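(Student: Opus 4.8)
\textbf{Proof proposal.} Write $g = f|_B \colon \mathcal{X}_B \to B$ for the corestriction, with $B = \mathbb{P}^1(\mathbb{C}) \setminus \Crit(f)$, and let $n$ denote the dimension of a fibre $X_b = g^{-1}(b)$. By definition $g$ is a rigid smooth family precisely when it admits no nontrivial infinitesimal deformation as a smooth proper family over the fixed base $B$; this is exactly the rigidity needed to feed $g$ into the finiteness statements of Lemma~\ref{lemma:ridig-lefschetz-fibrations-bounded} and Corollary~\ref{corollary:lefschetz-fibrations-finiteness}. The plan is therefore to translate rigidity into a cohomological vanishing and then kill it using the infinitesimal Torelli hypothesis.

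First I would reduce to the fibres. First-order deformations of $\mathcal{X}_B$ as a flat $B$-scheme are classified by $H^1(\mathcal{X}_B, \mathcal{T}_{\mathcal{X}_B/B})$, with infinitesimal automorphisms in $H^0$. In the situation at hand the fibres are canonically polarised, hence of general type, so their automorphism groups are finite and $H^0(X_b, \mathcal{T}_{X_b}) = 0$ for every $b$; since $B$ is a smooth curve this forces $g_* \mathcal{T}_{\mathcal{X}_B/B} = 0$. The Leray spectral sequence for $g$ then gives a canonical isomorphism
\[
  H^1(\mathcal{X}_B, \mathcal{T}_{\mathcal{X}_B/B}) \cong H^0(B, R^1 g_* \mathcal{T}_{\mathcal{X}_B/B}),
\]
so it suffices to show that $H^0(B, R^1 g_* \mathcal{T}_{\mathcal{X}_B/B}) = 0$. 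A global section here is nothing but a holomorphically varying family of first-order deformations $\xi_b \in H^1(X_b, \mathcal{T}_{X_b})$ of the fibres.

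Next I would invoke infinitesimal Torelli. Relativising the Torelli map of Definition~\ref{definition:CI-admissible} produces a morphism of sheaves
\[
  c \colon R^1 g_* \mathcal{T}_{\mathcal{X}_B/B} \longrightarrow \bigoplus_{p + q = n} \mathcal{H}om\big(R^p g_* \Omega^q_{\mathcal{X}_B/B},\, R^{p+1} g_* \Omega^{q-1}_{\mathcal{X}_B/B}\big),
\]
given fibrewise by cup product with the Kodaira--Spencer class, i.e. by the differential of the period map attached to the variation of Hodge structure $\mathcal{H} = R^n g_* \mathbb{C} \otimes \mathcal{O}_B$. By hypothesis every fibre satisfies infinitesimal Torelli, so $c$ is injective on each fibre, hence injective as a sheaf map and on global sections. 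Thus a nonzero $\xi \in H^0(B, R^1 g_* \mathcal{T}_{\mathcal{X}_B/B})$ would yield a nonzero global section $c(\xi)$ of the Hodge $\mathcal{H}om$-bundle, that is, a nonzero infinitesimal variation of the Hodge filtration $\mathcal{F}^\bullet \subset \mathcal{H}$.

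Finally I would exploit that this variation takes place inside the \emph{fixed} local system $(\mathcal{H}, \nabla)$: any deformation of $g$ over $B$ preserves the topological type of the family, so the Gauss--Manin connection and the monodromy representation $\pi_1(B) \to \mathrm{GL}(H_{\mathbb{Z}})$ are constant along it. Hence $c(\xi)$ is not an arbitrary section but an infinitesimal deformation of the period map $\Phi \colon B \to \Gamma \backslash D$ that is horizontal (Griffiths transversal) and equivariant for the unchanged monodromy, while $\Phi$ itself is an immersion by Torelli. The crux is to conclude that such a deformation must vanish, forcing $c(\xi) = 0$ and so $\xi = 0$. I expect this last step to be the main obstacle: upgrading the pointwise injectivity supplied by infinitesimal Torelli to a \emph{global} rigidity statement over the open curve $B$. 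I would attack it through the rigidity of the polarised variation of Hodge structure with prescribed monodromy, reducing the vanishing of $c(\xi)$ to the absence of the relevant monodromy invariants on the $\mathcal{H}om$-bundle; everything preceding this reduction is formal deformation theory and standard Hodge theory.
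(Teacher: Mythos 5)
The paper does not actually write out a proof of this lemma: it states that the argument of Javanpeykar's Theorem~2.5 applies verbatim, and that argument is the Gonz\'alez-Diez/Javanpeykar combination of (a) deformation theory of canonically polarised fibres, (b) infinitesimal Torelli to identify Kodaira--Spencer classes with the differential of the period map, and (c) the rigidity theory of polarised variations of Hodge structure over the quasi-projective base \(B\) (Schmid's theorem of the fixed part / rigidity of period maps with prescribed monodromy). Your outline correctly sets up (a) and (b), but it stops exactly at (c): the step you yourself call ``the main obstacle'' is the entire mathematical content of the lemma, and ``I would attack it through the rigidity of the polarised variation of Hodge structure'' is a plan, not a proof. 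Without invoking (and correctly applying) the fixed-part/rigidity theorem for VHS, the pointwise injectivity of the infinitesimal Torelli map gives nothing global, so the argument does not close.

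There is also a concrete error in your reduction. You translate rigidity into the vanishing of \(H^1(\mathcal{X}_B, \mathcal{T}_{\mathcal{X}_B/B}) \cong H^0(B, R^1 g_* \mathcal{T}_{\mathcal{X}_B/B})\). This group is typically \emph{nonzero} in the situations where the lemma is used: \(R^1 g_* \mathcal{T}_{\mathcal{X}_B/B}\) is a vector bundle on the affine curve \(B\) of rank \(h^1(X_b, \mathcal{T}_{X_b}) > 0\) (the fibres are complete intersections of general type and genuinely move in moduli), and any positive-rank vector bundle on an affine curve has many global sections; the same applies to the target \(\mathcal{H}om\)-bundle, so injectivity of your sheaf map \(c\) cannot force sections to vanish. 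The notion of rigidity that feeds into Lemma~\ref{lemma:ridig-lefschetz-fibrations-bounded} (Kov\'acs--Lieblich) is weaker than infinitesimal rigidity: one must show that every deformation of \(\left.f\right|_B\) over a \emph{connected reduced} parameter space \(T\) is fibrewise trivial, i.e.\ that the Kodaira--Spencer classes of an \emph{actual} family over \(B \times T\) vanish --- obstructed first-order deformations are allowed to survive. It is precisely for such integrable deformations that the underlying local system is constant along \(T\) and the VHS rigidity machinery applies; so you should reformulate the goal in those terms before attempting the Hodge-theoretic step.
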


\begin{proof}
  Follows verbatim the proof of~\cite[Theorem~2.5]{javanpeykar/belyi}.
\end{proof}

\begin{lemma}\label{lemma:conjugation-finiteness}
  Let \(X\) be a smooth complex projective variety, and \(f \colon Y \rightarrow \mathbb{P}^1\) be a Lefschetz fibration over~\(X\). Put \(B = \mathbb{P}^1(\mathbb{C}) \setminus \Crit(f)\). Assume that \(\Crit(f) \subseteq \mathbb{P}^1(\overline{\mathbb{Q}})\), and the orbit of the morphism \(f^{\sigma} \colon Y^{\sigma} \rightarrow \mathbb{P}^1\) under the action of \(\Stab_{\Aut(\mathbb{C} / \mathbb{Q})}(B^{\sigma})\) is finite for any \(\sigma \in \Aut(\mathbb{C} / \mathbb{Q})\). Then the morphism \(f \colon Y \rightarrow \mathbb{P}^1\) can be defined over \(\overline{\mathbb{Q}}\).
\end{lemma}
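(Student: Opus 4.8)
The plan is to reduce the statement to the descent criterion of González-Diez that an object defined over \(\mathbb{C}\) which admits only finitely many \(\Aut(\mathbb{C}/\mathbb{Q})\)-conjugates up to isomorphism can already be defined over \(\overline{\mathbb{Q}}\) (see \cite{gonzalez-diez/belyi}), in the form for morphisms to a fixed \(\mathbb{Q}\)-base used by Javanpeykar (see \cite{javanpeykar/belyi}). Since \(\mathbb{P}^1\) is defined over \(\mathbb{Q}\), we have \((\mathbb{P}^1)^{\sigma} = \mathbb{P}^1\) for every \(\sigma\), so each conjugate \(f^{\sigma} \colon Y^{\sigma} \to \mathbb{P}^1\) is again a morphism to the \emph{same} base, and it makes sense to count the \(f^{\sigma}\) up to isomorphism over \(\mathbb{P}^1\). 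Thus the whole problem is to prove that the set \(\{f^{\sigma} : \sigma \in \Aut(\mathbb{C}/\mathbb{Q})\}\) is finite up to isomorphism over \(\mathbb{P}^1\); the final appeal to the criterion is then formal.

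First I would exploit the algebraicity of the critical locus. Because \(\Crit(f) \subseteq \mathbb{P}^1(\overline{\mathbb{Q}})\) is a finite set of algebraic points, each of its points has only finitely many Galois conjugates, so the collection of conjugate loci \(\{\sigma(\Crit(f)) : \sigma \in \Aut(\mathbb{C}/\mathbb{Q})\}\) is finite. Writing \(\Crit(f^{\sigma}) = \sigma(\Crit(f))\) and \(B^{\sigma} = \mathbb{P}^1(\mathbb{C}) \setminus \Crit(f^{\sigma})\), this means there are only finitely many distinct open sets \(B^{\sigma}\); I would fix representatives \(\tau_1, \dots, \tau_r \in \Aut(\mathbb{C}/\mathbb{Q})\) so that \(\{\tau_i(\Crit(f))\}_{i=1}^{r}\) exhausts all conjugate loci.

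Next I would organise the conjugates of \(f\) according to their critical loci. For a fixed \(i\), the set of \(\sigma\) with \(\Crit(f^{\sigma}) = \tau_i(\Crit(f))\) is exactly the coset \(\Stab_{\Aut(\mathbb{C}/\mathbb{Q})}(B^{\tau_i}) \cdot \tau_i\): indeed \(\sigma(\Crit f) = \tau_i(\Crit f)\) holds if and only if \(\sigma\tau_i^{-1}\) stabilises \(\tau_i(\Crit f) = \Crit(f^{\tau_i})\), i.e. \(\sigma\tau_i^{-1} \in \Stab_{\Aut(\mathbb{C}/\mathbb{Q})}(B^{\tau_i})\). For such \(\sigma = \nu\tau_i\) with \(\nu \in \Stab_{\Aut(\mathbb{C}/\mathbb{Q})}(B^{\tau_i})\) one has \(f^{\sigma} = (f^{\tau_i})^{\nu}\), so the conjugates of \(f\) with critical locus \(\tau_i(\Crit f)\) are precisely the orbit of \(f^{\tau_i}\) under \(\Stab_{\Aut(\mathbb{C}/\mathbb{Q})}(B^{\tau_i})\). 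By the hypothesis applied to the automorphism \(\tau_i\), this orbit is finite up to isomorphism over \(\mathbb{P}^1\). Taking the union over the finitely many representatives \(\tau_1, \dots, \tau_r\) then shows that \(\{f^{\sigma} : \sigma \in \Aut(\mathbb{C}/\mathbb{Q})\}\) is finite up to isomorphism over \(\mathbb{P}^1\), and the descent criterion concludes that \(f\) is defined over \(\overline{\mathbb{Q}}\). (Note that this is exactly where the assumption is used \emph{for all} \(\sigma\), not merely for \(\sigma = \mathrm{id}\): we invoke it at each representative \(\tau_i\).)

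I expect the main obstacle to be the careful justification of the descent criterion for the \emph{morphism} \(f\) rather than for a bare variety: one must know that a morphism \(f \colon Y \to \mathbb{P}^1\) with finitely many conjugates up to isomorphism over \(\mathbb{P}^1\) descends to \(\overline{\mathbb{Q}}\) over the \(\mathbb{Q}\)-model of \(\mathbb{P}^1\). This is the substantive input imported from \cite{gonzalez-diez/belyi} and \cite{javanpeykar/belyi}; it rests on spreading \(f\) out over a \(\overline{\mathbb{Q}}\)-base and observing that finitely many conjugates forces the family to be isotrivial, so that \(f\) is isomorphic to a fibre over a \(\overline{\mathbb{Q}}\)-point. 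By contrast, the Galois-orbit bookkeeping above is routine once the convention for the conjugation action is fixed and the compatibility of isomorphisms over \(\mathbb{P}^1\) with that action is noted, using \((\mathbb{P}^1)^{\sigma} = \mathbb{P}^1\).
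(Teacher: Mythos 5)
Your proposal is correct and follows essentially the same route as the paper: reduce via Gonz\'{a}lez-Diez's weak form of Weil's rationality criterion to showing that the set of conjugates \(\{f^{\sigma}\}\) is finite, and then deduce this finiteness from the hypothesis on the stabiliser orbits. The only difference is that you spell out the coset bookkeeping (finitely many conjugate critical loci because \(\Crit(f) \subseteq \mathbb{P}^1(\overline{\mathbb{Q}})\), then one stabiliser orbit per representative) which the paper leaves implicit in the phrase ``follows from our assumptions''.
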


\begin{proof}
  Gonz\'{a}lez-Diez's weak version of Weil's criterion of rationality (see~\cite[60--61]{gonzalez-diez/belyi}) states that our statement is equivalent to the following one: the set of conjugates \(\{f^{\sigma} \colon Y^{\sigma} \rightarrow \mathbb{P}^1\}\) under the \(\Aut(\mathbb{C} / \mathbb{Q})\)-action is finite. In turn, this follows from our assumptions and~\cite[Proposition~III.9.8]{hartshorne/algebraic}.
\end{proof}

\begin{theorem}[see~{\cite[Theorem~4.2]{javanpeykar/belyi}}]\label{theorem:CI-admissible}
  Let \(Y\) be a complex Mori dream space with \(\Cl(Y) \simeq \mathbb{Z}\) which can be defined over~\(\mathbb{Q}\), and \(X \subseteq Y\) be a smooth complex admissible complete intersection (see Definition~\ref{definition:CI-admissible}) of general type of dimension at least 3. The following assertions are equivalent.

  \begin{enumerate}
  \item The variety \(X\) can be defined over \(\overline{\mathbb{Q}}\).
  \item There exists a Lefschetz pencil \(f \colon X \dashrightarrow \mathbb{P}^1\) with \(\Crit(f) \subset \mathbb{P}^1(\overline{\mathbb{Q}})\).
  \item There exists a Lefschetz function \(f \colon X \dashrightarrow \mathbb{P}^1\) with \(\Crit(f) \subset \mathbb{P}^1(\overline{\mathbb{Q}})\).
  \item There exists a Lefschetz function \(f \colon X \dashrightarrow \mathbb{P}^1\) with \(|\Crit(f)| \leqslant 3\).
  \item There exists a Lefschetz fibration \(\widetilde{f} \colon Y \rightarrow \mathbb{P}^1\) over \(X\) with \(\Crit(\widetilde{f}) \subset \mathbb{P}^1(\overline{\mathbb{Q}})\).
  \end{enumerate}
\end{theorem}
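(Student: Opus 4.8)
The plan is to mirror Javanpeykar's argument (see~\cite[Theorem~4.2]{javanpeykar/belyi}) and to prove the cycle \((1) \Rightarrow (2) \Rightarrow (5) \Rightarrow (1)\) for the geometric statements, together with the side-equivalences \((2) \Rightarrow (3) \Rightarrow (4) \Rightarrow (2)\) for the function statements, arranging matters so that each clause of Definition~\ref{definition:CI-admissible} is invoked exactly where Javanpeykar uses the analogous property of complete intersections in \(\mathbb{P}^n\). For \((1) \Rightarrow (2)\) I would first note that, since \(\dim(X) \geqslant 3\), the Lefschetz clause gives \(\Pic(X) \simeq \mathbb{Z}\), so the ample generator \(\left.\mathcal{H}\right|_X\) is canonical and hence Galois-stable; if \(X\) is defined over \(\overline{\mathbb{Q}}\), then a very ample power \(\left.\mathcal{H}\right|_X^{\otimes k}\) and its complete linear system are defined over \(\overline{\mathbb{Q}}\) (the surjectivity clause identifies this system with the one cut out from \(Y\)). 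The non-Lefschetz pencils form a proper closed subset defined over \(\overline{\mathbb{Q}}\), so a general pencil \(\mathcal{L}\) can be chosen over \(\overline{\mathbb{Q}}\), and its critical values, being the image of a \(\overline{\mathbb{Q}}\)-scheme under a \(\overline{\mathbb{Q}}\)-morphism, lie in \(\mathbb{P}^1(\overline{\mathbb{Q}})\). For \((2) \Rightarrow (5)\) I would simply resolve the indeterminacy of \(\Phi_{\mathcal{L}}\) by the single blow-up \(Y = \Bl_{\Bs(\mathcal{L})}(X)\); the resulting Lefschetz fibration \(\widetilde{f}\) has the same critical values as the pencil, still in \(\mathbb{P}^1(\overline{\mathbb{Q}})\).

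The core is \((5) \Rightarrow (1)\). Put \(B = \mathbb{P}^1(\mathbb{C}) \setminus \Crit(\widetilde{f})\). The smooth fibres of \(\widetilde{f}\) are smooth divisors of \(X\) in the system of \(\left.\mathcal{H}\right|_X^{\otimes k}\); by the adjunction and Lefschetz clauses they are themselves admissible complete intersections of general type, hence canonically polarised with a single Hilbert polynomial \(h\), and by the infinitesimal Torelli clause they satisfy infinitesimal Torelli. Lemma~\ref{lemma:lefschetz-fibration-rigidity} then shows that \(\widetilde{f}\) is a rigid Lefschetz fibration. To descend, fix \(\sigma \in \Aut(\mathbb{C} / \mathbb{Q})\) and \(\tau \in \Stab_{\Aut(\mathbb{C} / \mathbb{Q})}(B^{\sigma})\); conjugation preserves admissibility (conjugation clause) and Hilbert polynomials, and \(\tau\) fixes \(\Crit(\widetilde{f})^{\sigma} = \mathbb{P}^1(\mathbb{C}) \setminus B^{\sigma}\), so \((\widetilde{f}^{\sigma})^{\tau}\) is again a rigid Lefschetz fibration smooth over \(B^{\sigma}\) with fibre Hilbert polynomial \(h\). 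By the boundedness of such objects (Lemma~\ref{lemma:ridig-lefschetz-fibrations-bounded}) the \(\Stab(B^{\sigma})\)-orbit of \(\widetilde{f}^{\sigma}\) is finite, so Lemma~\ref{lemma:conjugation-finiteness} yields that \(\widetilde{f}\) is defined over \(\overline{\mathbb{Q}}\); contracting the exceptional divisor of \(Y \rightarrow X\), a \(\overline{\mathbb{Q}}\)-morphism, then defines \(X\) over \(\overline{\mathbb{Q}}\).

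It remains to treat the function statements. The implication \((2) \Rightarrow (3)\) is immediate, since a Lefschetz pencil is a Lefschetz function with the same critical set. For \((3) \Rightarrow (4)\) I would apply Belyi's Theorem~\ref{theorem:belyi}: post-composing \(f\) with a Belyi map \(\beta \colon \mathbb{P}^1 \rightarrow \mathbb{P}^1\) that carries the finitely many algebraic critical values of \(f\) into \(\{0, 1, \infty\}\) and is unramified elsewhere produces a Lefschetz function \(\beta \circ f\) with at most three critical values. For \((4) \Rightarrow (2)\) I would write \(f = r \circ \Phi_{\mathcal{L}}\) and move the at most three critical values to \(\{0, 1, \infty\}\); since every branch point of \(r\) forces a singular (non-reduced) fibre of \(f\), and hence is a critical value of \(f\), the map \(r\) is ramified only over \(\{0, 1, \infty\}\), i.e.\ \(r\) is itself a Belyi map and therefore defined over \(\overline{\mathbb{Q}}\). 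Consequently \(\Crit(\Phi_{\mathcal{L}}) \subseteq r^{-1}(\Crit(f)) \subset r^{-1}(\{0, 1, \infty\}) \subset \mathbb{P}^1(\overline{\mathbb{Q}})\), so the underlying pencil realises \((2)\).

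I expect the main obstacle to be the descent step \((5) \Rightarrow (1)\). It is the only place where the passage beyond \(\mathbb{P}^n\) is not purely formal: the finiteness of the \(\Stab(B^{\sigma})\)-orbits rests on the boundedness of \emph{rigid} Lefschetz fibrations, and the rigidity of the conjugate fibres is secured only by the infinitesimal Torelli clause of admissibility, which is precisely the input that Proposition~\ref{proposition:CI-admissible} had to provide through the theorems of Konno and Usui. Once admissibility is in force, every other step transfers essentially verbatim from Javanpeykar's proof.
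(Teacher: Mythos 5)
Your proposal is correct and follows essentially the same route as the paper: the content of every step (rigidity of the smooth part of the Lefschetz fibration via the infinitesimal Torelli clause, boundedness of rigid fibrations via Kov\'{a}cs--Lieblich, descent via the Gonz\'{a}lez-Diez/Weil criterion, and the conjugation/adjunction clauses of admissibility exactly where Javanpeykar uses the corresponding facts for complete intersections in \(\mathbb{P}^n\)) matches the paper's proof, the only cosmetic difference being that you close the equivalences through two cycles sharing node (2) and spell out \((4)\Rightarrow(2)\) via the Belyi-map argument, whereas the paper runs the single cycle \((1)\Rightarrow(2)\Rightarrow(3)\Rightarrow(4)\Rightarrow(5)\Rightarrow(1)\) and treats \((4)\Rightarrow(5)\) as immediate.
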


\begin{proof}
  The implication \((1)\Longrightarrow(2)\) follows from the existence of a Lefschetz pencil on a projective variety over \(\overline{\mathbb{Q}}\). The implications \((2)\Longrightarrow(3)\) and \((4)\Longrightarrow(5)\) hold by definition. The implication \((3)\Longrightarrow(4)\) follows from Belyi's algorithm (see~\cite{belyi/extensions}). Let us prove the implication \((5)\Longrightarrow(1)\).

  Assume that there exists a Lefschetz fibration \(f \colon Y \rightarrow \mathbb{P}^1\) over \(X\) whose critical points lie in \(\mathbb{P}^1(\overline{\mathbb{Q}})\). Put \(B = \mathbb{P}^1(\mathbb{C}) \setminus \Crit(f)\). By definition a Lefschetz pencil is contained in  a very ample linear system. We have assumed \(X \subset Y\) to be an admissible complete intersection, hence the corestriction \(\left.f\right|_B\) is a smooth family of canonically polarised admissible complete intersections in \(Y\). Actually, fibres of \(\left.f\right|_B\) are admissible complete intersections as well, and we only have to apply the adjunction formula. Moreover, by the same assumption fibres of \(\left.f\right|_B\) satisfy the infinitesimal Torelli theorem. Then \(\left.f\right|_B\) is a rigid family by Lemma~\ref{lemma:lefschetz-fibration-rigidity}.

  Recall that \(\Aut(\mathbb{C} / \mathbb{Q})\)-action preserves the multidegree of a complete intersection by Lemma~\ref{lemma:CI-MDS-one-conjugation}. Consequently, the Hilbert polynomial of \(X^{\sigma}\) also does not depend on \(\sigma \in \Aut(\mathbb{C} / \mathbb{Q})\), since all of them are deformationally equivalent. Then we can apply Corollary~\ref{corollary:lefschetz-fibrations-finiteness} to conclude that the number of conjugates of \(f\) under the action of \(\Stab_{\Aut(\mathbb{C} / \mathbb{Q})}(B^{\sigma})\) is finite for any \(\sigma \in \Aut(\mathbb{C} / \mathbb{Q})\). So the morphism \(f \colon Y \rightarrow \mathbb{P}^1\) and the variety \(Y\) can be defined over~\(\overline{\mathbb{Q}}\) by Lemma~\ref{lemma:conjugation-finiteness}. At last, this implies that \(X\) itself can be defined over \(\overline{\mathbb{Q}}\) (see~\cite[Lemma~3.3]{javanpeykar/belyi}).
\end{proof}

\begin{proof}[Proof of Theorems~\ref{theorem:CI-grassmannians} and~\ref{theorem:WCI}]
  Let us assume that one of the following holds:
  \begin{itemize}
  \item \(X \subset Y\) is a smooth complex complete intersection of general type of dimension at least~3 in a generalised Grassmannian \(Y\);
  \item \(X \subset \mathbb{P}(\rho)\) be a smooth well-formed complex weighted complete intersection of general type of dimension at least 3, and \(\dim(\vert \mathcal{O}_X(1) \vert) \geqslant 2\).
  \end{itemize}
  Proposition~\ref{proposition:CI-admissible} implies that \(X\) is an admissible complete intersection (in the sense of Definition~\ref{definition:CI-admissible}) of general type. Then we can apply Theorem~\ref{theorem:CI-admissible}.
\end{proof}

\section{A possible generalisation}\label{section:generalisation}

As we have seen in the previous section, Belyi's theorem holds for smooth (well-formed) complete intersections of general type in a generalised Grassmannian or a weighted projective space. Moreover, we have proved Belyi's theorem for any \emph{admissible} smooth complete intersection of general type in a complex Mori dream space \(Y\) with \(\Cl(Y) \simeq \mathbb{Z}\) which can be defined over \(\mathbb{Q}\). It is natural to ask whether this setting generalises to \emph{weighted generalised Grassmannians}.

\begin{definition}[see~\cites{corti/weighted,qureshi/grassmannians}]
  A Mori dream space \(Y\) with \(\Cl(Y) \simeq \mathbb{Z}\) is a \emph{weighted generalised Grassmannian} if its Cox ring \(\mathcal{R}(Y)\) admits homogeneous generators such that the associated ideal of relations coincides with the ideal of relations of the Cox ring of a usual generalised Grassmannian.

  In other words, a weighted generalised Grassmannian is a closed subvariety of a weighted projective space defined by the relations of a usual generalised Grassmannian in its Cox ring (see~\cite[\nopp 3.2.3]{arzhantsev/cox} for explicit description).
\end{definition}

So \(Y\) is a Mori dream space with \(\Cl(Y) \simeq \mathbb{Z}\), and can be defined over~\(\mathbb{Q}\). Weighted Grassmannians in the sense of~\cite{corti/weighted} is a special case of weighted generalised Grassmannians. We refer the reader to~\cite{qureshi/flag-I,qureshi/threefolds,qureshi/flag-II} for explicit examples.

Weighted projective spaces are a special case of weighted generalised Grassmannians, hence we have to introduce the notion of well-formedness (cf. Subsection~\ref{subsection:WCI}).

\begin{definition}[cf. Definition~\ref{definition:WF-subscheme}]
  Let \(Y \subset \mathbb{P}(\rho)\) be a weighted generalised Grassmannian. It is \emph{well-formed} if \(\codim_Y(Y \cap \Sing(\mathbb{P}(\rho))) \geqslant 2\). A closed subscheme \(X \subseteq Y\) is \emph{well-formed} if \(Y \subset \mathbb{P}(\rho)\) is well-formed, and \(\codim_X(X \cap \Sing(Y)) \geqslant 2\).
\end{definition}

Theorems~\ref{theorem:CI-grassmannians} and~\ref{theorem:WCI} lead us to the following natural question.

\begin{question}\label{question:main}
  Are smooth well-formed complete intersections in weighted generalised Grassmannians admissible in the sense of Definition~\ref{definition:CI-admissible}?
\end{question}

\begin{corollary}[see Theorem~\ref{theorem:CI-admissible}]
  Assume that Question~\ref{question:main} has a positive answer. Let \(Y\) be a weighted generalised Grassmannian, and \(X \subset Y\) be a smooth well-formed complex complete intersection of general type of dimension at least 3. Then the variety \(X\) can be defined over \(\overline{\mathbb{Q}}\) if and only if there exists a Lefschetz function \(X \dashrightarrow \mathbb{P}^1\) with at most 3 critical points.
\end{corollary}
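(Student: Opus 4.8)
The plan is to observe that this statement is an immediate consequence of Theorem~\ref{theorem:CI-admissible} once the admissibility of \(X\) has been secured. I would begin by recalling, from the discussion immediately following the definition of a weighted generalised Grassmannian, that \(Y\) is a complex Mori dream space with \(\Cl(Y) \simeq \mathbb{Z}\) which can be defined over \(\mathbb{Q}\). This shows that \(Y\) satisfies precisely the standing hypotheses imposed on the ambient space in Theorem~\ref{theorem:CI-admissible}, so no additional verification is needed at the level of the ambient variety.

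Next I would invoke the hypothesis that Question~\ref{question:main} has a positive answer: under this assumption the smooth well-formed complete intersection \(X \subset Y\) is admissible in the sense of Definition~\ref{definition:CI-admissible}. Combined with the remaining hypotheses that \(X\) is of general type and of dimension at least \(3\), every condition required to invoke Theorem~\ref{theorem:CI-admissible} is then in place.

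Finally I would apply Theorem~\ref{theorem:CI-admissible} directly. The equivalence of its assertions~(1) and~(4)---namely that \(X\) can be defined over \(\overline{\mathbb{Q}}\) if and only if there exists a Lefschetz function \(f \colon X \dashrightarrow \mathbb{P}^1\) with \(|\Crit(f)| \leqslant 3\)---is verbatim the claim of the corollary, which completes the deduction.

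The genuine difficulty here lies not in the deduction but in the hypothesis it is conditioned upon: establishing that smooth well-formed complete intersections in weighted generalised Grassmannians really are admissible, which is the content of Question~\ref{question:main}. The principal obstacle in that direction would be proving the infinitesimal Torelli theorem in this more general setting, mirroring the roles played by the results of Konno and Usui in the proof of Proposition~\ref{proposition:CI-admissible}. By contrast, the Lefschetz hyperplane theorem, the adjunction formula, surjectivity of the restriction maps, and invariance under the \(\Aut(\mathbb{C} / \mathbb{Q})\)-action should be obtainable along the same lines already developed for complete intersections in ordinary Grassmannians and weighted projective spaces.
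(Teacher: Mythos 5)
Your proposal is correct and matches the paper's intended derivation exactly: the paper offers no separate proof beyond the pointer to Theorem~\ref{theorem:CI-admissible}, and the argument is precisely that \(Y\) is a Mori dream space with \(\Cl(Y) \simeq \mathbb{Z}\) defined over \(\mathbb{Q}\), a positive answer to Question~\ref{question:main} supplies admissibility, and the equivalence \((1)\Longleftrightarrow(4)\) of Theorem~\ref{theorem:CI-admissible} gives the statement. Your closing observation that the real difficulty is the infinitesimal Torelli theorem in this setting also agrees with the paper's framing.
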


\begin{example}
  Let \(V = \langle e_0, e_1, e_2, e_3 \rangle\) and \(W = \wedge^2(V) = \langle e_{0, 1}, \ldots, e_{2, 3} \rangle\) be vector spaces, and \((a_0, a_1, a_2, a_3)\) be a tuple of integers such that \(a_i + a_j > a_0\) for any \(0 \leqslant i < j \leqslant 3\). If we denote by \(T_{i, j}\) the coordinates on \(W\), then the weighted projective space \(\mathbb{P} = \mathbb{P}(a_{0, 1}, a_{0,2}, a_{0,3}, a_{1,2}, a_{1,3}, a_{2, 3})\) with the following weights:
  \[
    a_{i, j} = \deg(T_{i, j}) = - a_0 + a_i + a_j > 0
  \]
  contains a weighted hypersurface \(Y \subset \mathbb{P}\) given by the Pl\"{u}cker relation
  \[
    T_{0, 1} T_{2, 3} + T_{1, 2} T_{0, 3} - T_{0, 2} T_{1, 3} = 0,
  \]
  which is quasi-homogeneous of the degree \(-a_0 + a_1 + a_2 + a_3\). Let us assume that \(Y\) is well-formed, then the adjunction formula implies that
  \[
    \deg(-K_{\mathbb{P}}) = 3 (-a_0 + a_1 + a_2 + a_3), \quad
    \deg(-K_Y) = 2 (-a_0 + a_1 + a_2 + a_3).
  \]

  Now let \(\widetilde{X} \subset \mathbb{P}\) be a general hypersurface of degree \(d\) such that \(X = Y \cap \widetilde{X} \subset \mathbb{P}\) is a smooth well-formed complete intersection of the multidegree \((d, -a_0 + a_1 + a_2 + a_3)\). We can think of \(X\) as a smooth well-formed hypersurface \(X \subset Y\) of degree \(d\) whose degree of the anticanonical class is equal to
  \[
    \deg(-K_X) = \deg(-K_{\mathbb{P}}) - (-a_0 + a_1 + a_2 + a_3) - d = \deg(-K_Y) - d.
  \]
  If \(d > -a_0 + a_1 + a_2 + a_3\), then \(X\) is of general type, so we can apply Theorem~\ref{theorem:WCI}.
\end{example}

\clearpage
\printbibliography

\end{document}